\theoremstyle{plain}
\newtheorem{theorem}{Theorem}%[section]
\newtheorem{lemma}[theorem]{Lemma}
\newtheorem{proposition}[theorem]{Proposition}
\theoremstyle{definition}
\newtheorem{definition}[theorem]{Definition}
\theoremstyle{remark}
\newtheorem*{remark}{Remark}%[section]
\renewcommand\thmcontinues[1]{Continued}
\DeclareMathOperator*{\argmin}{argmin}
\DeclareMathOperator*{\tr}{tr}
\DeclareMathOperator{\interior}{int}
\newcommand{\Xd}{\mathbf{X}}
\newcommand{\Yd}{\mathbf{Y}}
\newcommand{\V}{\mathcal{V}}
\newcommand{\Id}{\mathrm{Id}}
\newcommand{\R}{\mathbb{R}}
\newcommand{\Nat}{\mathbb{N}}
\newcommand{\Prob}{\mathbb{P}}
\newcommand{\Qrob}{\mathbb{Q}}
\newcommand{\OL}{\mathcal{O}}
\newcommand{\eps}{\varepsilon}
\newcommand{\Ec}[2]{\textnormal{E}\left[#1\middle\vert#2\right]}
\newcommand{\E}[1]{\textnormal{E}\left[#1\right]}
\newcommand{\Esub}[2]{\textnormal{E}_{#1}\mkern-4mu\left[#2\right]}
\newcommand{\cov}[2]{\textnormal{cov}\left(#1,#2\right)}
\newcommand{\Shat}{\widehat{\Sigma}}
\newcommand{\N}{\mathcal{N}}
\newcommand{\bh}{\hat{\beta}}
\newcommand{\toD}{\xrightarrow{\text{D}}}
\newcommand{\e}{\mathrm{e}}
\newcommand{\Hc}{\mathcal{H}}
\newcommand{\MMD}{\textnormal{MMD}}
\newcommand{\HSIC}{\textnormal{HSIC}}
\newcommand{\one}{\mathrm{1}}
\newcommand{\Kt}{\tilde{K}}
\newcommand{\Lt}{\tilde{L}}
\newcommand{\HB}{\widehat{\textnormal{HSIC}}_\textnormal{b}}
\newcommand{\Hu}{\widehat{\textnormal{HSIC}}_\textnormal{u}}
\newcommand{\HBl}{\widehat{\textnormal{HSIC}}_\textnormal{block}}
\newcommand{\Hinc}{\widehat{\textnormal{HSIC}}_\textnormal{inc}}
\newcommand{\Hest}{\widehat{\textnormal{HSIC}}}
\newcommand{\HvBl}{H_{\textnormal{block}}}
\newcommand{\Hvinc}{H_{\textnormal{inc}}}
\newcommand{\Sk}{\mathcal{S}}
\newcommand{\D}{\mathcal{D}}
\newcommand{\sumP}{\sum\nolimits_{\mathmakebox[10pt][l]{j=1}}^{p}}
\newcommand{\bpar}{\hat{\beta}^{\mspace{1mu}\textnormal{par}}}
\newcommand{\Sh}{\hat{S}}
\newcommand{\St}{{\tilde{S}}}
\newcommand{\ns}{\cellcolor{white!100}}
\newcommand{\sel}{\cellcolor{lightgray!100}}
\newcommand{\ac}{\cellcolor{black!100}}
\newcommand\independent{\protect\mathpalette{\protect\independenT}{\perp}}
\def\independenT#1#2{\mathrel{\rlap{$#1#2$}\mkern2mu{#1#2}}}
\icmltitlerunning{Post-Selection Inference with HSIC-Lasso}
\begin{document}

\twocolumn[
\icmltitle{Post-Selection Inference with HSIC-Lasso}

% It is OKAY to include author information, even for blind
% submissions: the style file will automatically remove it for you
% unless you've provided the [accepted] option to the icml2021
% package.

% List of affiliations: The first argument should be a (short)
% identifier you will use later to specify author affiliations
% Academic affiliations should list Department, University, City, Region, Country
% Industry affiliations should list Company, City, Region, Country

% You can specify symbols, otherwise they are numbered in order.
% Ideally, you should not use this facility. Affiliations will be numbered
% in order of appearance and this is the preferred way.
%\icmlsetsymbol{equal}{*}

\begin{icmlauthorlist}
\icmlauthor{Tobias Freidling}{cam}
\icmlauthor{Benjamin Poignard}{os,rik}
\icmlauthor{H\'ector Climente-Gonz\'alez}{rik}
\icmlauthor{Makoto Yamada}{rik,kyo}
\end{icmlauthorlist}

\icmlaffiliation{cam}{Department of Pure Mathematics and Mathematical Statistics, University of Cambridge, Cambridge, United Kingdom}
\icmlaffiliation{rik}{Center for Advanced Intelligence Project (AIP), RIKEN, Kyoto, Japan}
\icmlaffiliation{os}{Graduate School of Economics, Osaka University, Osaka, Japan}
\icmlaffiliation{kyo}{Graduate School of Informatics, Kyoto University, Kyoto, Japan}

\icmlcorrespondingauthor{Tobias Freidling}{taf40@cam.ac.uk}
% more e-mail addresses?
\icmlkeywords{Post-selection inference, Kernel methods, Feature-selection}

\vskip 0.3in
]

% this must go after the closing bracket ] following \twocolumn[ ...

% This command actually creates the footnote in the first column
% listing the affiliations and the copyright notice.
% The command takes one argument, which is text to display at the start of the footnote.
% The \icmlEqualContribution command is standard text for equal contribution.
% Remove it (just {}) if you do not need this facility.

\printAffiliationsAndNotice{}  % leave blank if no need to mention equal contribution
%\printAffiliationsAndNotice{\icmlEqualContribution} % otherwise use the standard text.

%%%%%%%%%%%%%%%%%%%%%%%%%%%%%%%%%%%%%%%%%%%%%%%%%%%%%%
% Adaptations:
%%% Paper
% [X] Mention pseudocode in appendix
% [X] comments on block size
% [X] comments on different results of Turkish dataset
% [X] comment on random Fourier features
% [X] cross-validation comment
% [X] proportion of screening data
% [X] differences between estimation procedures for Turkish dataset
%%% Supplement:
% [X] Pseudocode of Algorithm
% [X] experiments with Gaussian kernel
%%%%%%%%%%%%%%%%%%%%%%%%%%%%%%%%%%%%%%%%%%%%%%%%%%%%%%

\begin{abstract}
Detecting influential features in non-linear and/or high-dimensional data is a challenging and increasingly important task in machine learning. Variable selection methods have thus been gaining much attention as well as post-selection inference. Indeed, the selected features can be significantly flawed when the selection procedure is not accounted for. We  propose a selective inference procedure using the so-called model-free "HSIC-Lasso" based on the framework of truncated Gaussians combined with the polyhedral lemma. We then develop an algorithm, which allows for low computational costs and provides a selection of the regularisation parameter. The performance of our method is illustrated by both artificial and real-world data based experiments, which emphasise a tight control of the type-I error, even for small sample sizes.
\end{abstract}

\section{Introduction}
The choice of a relevant statistical model in light of the observations prior to any statistical inference is ubiquitous in statistics. This reduces computational costs and fosters parsimonious models. For example, in linear regression analysis, allowing for a subset of features to enter the model, i.e. the sparsity assumption, is particularly suited to high-dimensional data, which potentially provides more robust predictions. Yet, should one follow the standard procedure, which assumes a model specified a priori, the data-driven nature of the selection procedure would be tacitly overlooked.
Therefore, applying classical inference methods may entail seriously flawed results, as it was highlighted by \citet{leeb-2005,leeb-2006}, among others.\newline\newline
The most straightforward remedy is sample splitting, which uses one part of the data for model selection and the other part for inference, cf. \cite{sample_splitting}. However, this approach leaves space for improvement as the selection-data is outright disregarded at the inference step. In particular, two major paradigms regarding the treatment of model selection have evolved. \citet{posi} developed a post-selection inference (PSI) method, which enables to circumvent distorting effects inherent to any selection procedure. Yet, in practice this method often leads to overly conservative confidence intervals and entails high computational costs.\newline
In contrast to this work, \citet{lee} and \citet{taylor14} only accounted for the actual selection outcome by conditioning on the latter at the inference step. This requires control over or at least insight into the selection procedure in order to characterise the selection event. Under the Gaussian assumption, it is possible to express this event as a restriction on the distribution of the inference target, using the so-called polyhedral lemma, and derive a pivotal quantity. Due to its comparatively low computational cost and general set-up, this method was applied with great success to a variety of model selection approaches, e.g. \cite{gen1} and \cite{l1_pen}.\newline
Variable selection procedure methods typically include step-wise procedures - see \cite{forward-stepwise} -, information criteria, with the AIC and its extensions - see \cite{AIC} -, and regularisation methods, pioneered by \citeauthor{lasso}'s work on the Lasso (\citeyear{lasso}). Their limitations, especially regarding assumptions such as linearity or certain probability distributions, fostered a flourishing research on model-free feature selection. The kernel-based Hilbert-Schmidt independence criterion (HSIC) (\citeyear{gretton}), proposed by \citeauthor{gretton}, emerged as a key ingredient as it enables to quantify the dependence between two random variables. This allows for simple feature selection: For instance, one can select a fixed number of covariates that exhibit the highest HSIC-estimates with the response, which is referred to as HSIC-ordering in the rest of the paper. HSIC-Lasso (\citeyear{hsic_lasso}), proposed by \citeauthor{hsic_lasso}, additionally accounts for the dependence structure among the covariates and selects features with an \(L^1\)-penalty. \citet{yamada2018} developed a selective inference approach for the former selection procedure; the analogous results for HSIC-Lasso, however, are still an important blind spot in PSI.\newline
In this study, we are interested in the following problem: Given $n$ observations of a response variable and potential features, select the relevant covariates using the HSIC-Lasso screening procedure; then, based on the polyhedral lemma, develop asymptotic inference \emph{given} the Lasso selection procedure in the same spirit as \citet{lee}. Our contributions are as follows: First, we derive a tailored, novel asymptotic pivot and characterise the selection event of HSIC-Lasso in an affine linear fashion; then we propose an algorithm that solves cumbersome issues arising in applications, such as high computational costs and hyper-parameter choice; finally, to illustrate our theoretical results and the relevance of the proposed method, we conduct an empirical analysis using artificial and real-world data. To the best of our knowledge, this work is the first approach to tackle the question of PSI with HSIC-Lasso.

\section{Background}
In this section the two theoretical cornerstones which our work is founded on - namely PSI based on truncated Gaussians and the Hilbert-Schmidt independence criterion - are reviewed.
\subsection{PSI with Truncated Gaussians}
We first review the PSI-approach (\citeyear{lee}), which was pioneered by \citeauthor{lee} and which considers the distribution of the quantity of interest, alias target, conditionally on a selection event at the inference step. We denote the set of potential models \(\Sk\), the model estimator \(\Sh\) and suppose that the response \(Y\) follows the distribution \(\N(\mu,\Sigma)\), where \(\mu\) is unknown and \(\Sigma\) is given. It is assumed that the inference target can be expressed as \(\eta_S^T\mu\) for a vector \(\eta_S\) that can depend on the previously chosen model \(S\). Consequently, the distribution of interest is \(\eta_S^T Y\vert\{\Sh = S\}\). If the selection event allows for an affine linear representation, i.e. \(\{\Sh = S\} = \{AY\leq b\}\), with \(A\) and \(b\) only depending on \(S\) and the covariates, a specific model choice can be seen as a restriction on the distribution of the inference target. This is the object of the following result.

\begin{lemma}[Polyhedral lemma]\label{lem:polyhedral_lemma}
        Let \(Y\sim \N(\mu, \Sigma)\) with \(\mu \in \R^n\) and \(\Sigma \in \R^{n\times n}\), \(\eta \in \R^n\), \(A \in \R^{k\times n}\) and \(b \in \R^k\). Defining \(Z := \left(\Id - C \eta^T\right)Y\) and \(C := (\eta^T\Sigma\eta)^{-1}\Sigma\eta\), then
        \begin{equation*}
                \left\{AY\leq b\right\} =
                \left\{\V^-(Z)\leq\eta^T Y\leq\V^+(Z)\right\},
        \end{equation*}
        holds almost surely with
        \begin{align}
        \V^-(Z) &:= \max_{j:(AC)_j<0}\frac{b_j-(AZ)_j}{(AC)_j},\label{V-}\\
        \V^+(Z) &:= \max_{j:(AC)_j>0}\frac{b_j-(AZ)_j}{(AC)_j}.\label{V+}
        \end{align}
\end{lemma}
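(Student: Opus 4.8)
The plan is to turn the polyhedral constraint $\{AY \le b\}$ into a pair of one-sided bounds on the scalar $\eta^T Y$ by decomposing $Y$ along the direction singled out by $\eta$. First I would record the two defining properties of $C$. Since $C = (\eta^T\Sigma\eta)^{-1}\Sigma\eta$, one has $\eta^T C = (\eta^T\Sigma\eta)^{-1}\eta^T\Sigma\eta = 1$, so $Z = (\Id - C\eta^T)Y$ and $\eta^T Y$ reconstruct $Y$ via $Y = Z + C(\eta^T Y)$; writing $w := \eta^T Y$, this reads $Y = Z + Cw$. I would also verify $\cov{Z}{\eta^T Y} = (\Id - C\eta^T)\Sigma\eta = \Sigma\eta - C(\eta^T\Sigma\eta) = 0$, so that $Z$ and $\eta^T Y$ are jointly Gaussian and hence independent. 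This independence is not needed for the set identity itself, which is purely algebraic, but it is the reason $C$ is chosen in exactly this way: it lets the truncation endpoints $\V^\pm$, which will depend only on $Z$, be treated as constants independent of the target $\eta^T Y$ in the downstream pivot.

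The core step is then a deterministic computation for a fixed realization. Substituting $Y = Z + Cw$ gives $AY = AZ + (AC)w$, so the $k$ constraints $AY \le b$ become, row by row, $(AC)_j\, w \le b_j - (AZ)_j$. I would split the index set by the sign of $(AC)_j$. Each row with $(AC)_j > 0$ rearranges to an upper bound $w \le (b_j - (AZ)_j)/(AC)_j$, and the tightest of these gives the upper endpoint $\V^+(Z)$; each row with $(AC)_j < 0$ flips to a lower bound $w \ge (b_j - (AZ)_j)/(AC)_j$, the tightest of which gives $\V^-(Z)$. The conjunction of all rows is therefore exactly $\V^-(Z) \le \eta^T Y \le \V^+(Z)$, which is the claimed interval.

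The one point that needs care, and what I expect to be the main obstacle, is the degenerate rows with $(AC)_j = 0$, together with the ``almost surely'' qualifier. For such a row the left-hand side loses its dependence on $w$ and the constraint collapses to $(AZ)_j \le b_j$, a condition on $Z$ alone that carries no information about $\eta^T Y$; one must argue that these rows are automatically satisfied on the selection event, or absorb them into a separate $Z$-measurable condition as in the original polyhedral formulation, so that they may be dropped from the interval description. The ``almost surely'' then handles the boundary set where some inequality holds with equality: because $Y$ has a density, the events $\{(AC)_j\, w = b_j - (AZ)_j\}$ have probability zero, so strict versus non-strict inequalities and ties in the extrema defining $\V^\pm$ may be ignored. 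Assembling the non-degenerate bounds with this measure-zero caveat yields the stated equivalence.
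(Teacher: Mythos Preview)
The paper does not actually provide its own proof of this lemma; it is stated as background from \citet{lee} and only invoked later. So there is no paper proof to compare against, and your argument should be judged on its own.

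Your approach is the standard one and is correct: decompose $Y = Z + C(\eta^T Y)$, substitute into $AY\le b$, and split the rows by the sign of $(AC)_j$. Two remarks. First, note that the displayed definition of $\V^+$ in the lemma uses $\max$, which is a typo; it must be $\min$ for the ``tightest upper bound'' interpretation you correctly use. Second, your handling of the rows with $(AC)_j=0$ is the right diagnosis but the wrong resolution: these are not a measure-zero nuisance that the ``almost surely'' can absorb. The event $\{(AZ)_j\le b_j\}$ has positive probability in general, and the full statement in \citet{lee} carries an additional $Z$-measurable condition $\V^0(Z)\ge 0$ with $\V^0(Z)=\min_{j:(AC)_j=0}\bigl(b_j-(AZ)_j\bigr)$. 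The paper's version simply suppresses it. The ``almost surely'' is only doing the lighter work you describe at the end---collapsing strict and non-strict inequalities on the boundary hyperplanes---not eliminating the degenerate rows. With that correction, your proof is complete.
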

Hence, selection restricts the values the inference target can take, which gives rise to the definition of truncated Gaussians.
\begin{definition}
        Let \(\mu\in\R\), \(\sigma^2 > 0\) and \(a, b \in\R\) such that \(a < b\). Then the cumulative distribution function of a \emph{Gaussian distribution \(\N(\mu,\sigma^2)\) truncated to the interval \([a,b]\)} is given by
        \begin{equation*}
        F_{\mu,\sigma^2}^{[a,b]}(x) = \frac{\Phi(\frac{x-\mu}{\sigma}) - \Phi(\frac{a-\mu}{\sigma})}{\Phi(\frac{b-\mu}{\sigma}) - \Phi(\frac{a-\mu}{\sigma})},
        \end{equation*}
        where \(\Phi\) denotes the cdf of \(\N(0,1)\).
\end{definition}
Concluding the line of thought, we are now able to state a pivotal quantity that can be used for inference.
\begin{theorem}\label{theo:polyhedral_lemma}
        Under the assumptions of Lemma \ref{lem:polyhedral_lemma} it holds that
        \begin{equation*}\label{eq:pivot}
        F_{\eta^T\mu,\eta^T\Sigma\eta}^{[\V^-(Z),\V^+(Z)]}(\eta^T Y)\vert \{A Y\leq b\} \sim\mathrm{Unif}\,(0,1),
        \end{equation*}
        where \(\V^-\) and \(\V^+\) are given by (\ref{V-}) and (\ref{V+}), respectively.
\end{theorem}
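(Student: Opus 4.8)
The plan rests on a single structural fact: the target $\eta^T Y$ and the auxiliary vector $Z$ are independent. First I would verify this. Both $\eta^T Y$ and $Z = (\Id - C\eta^T)Y$ are linear images of the Gaussian vector $Y$, hence jointly Gaussian, so independence reduces to a vanishing covariance. A direct computation gives
\begin{equation*}
\cov{Z}{\eta^T Y} = (\Id - C\eta^T)\Sigma\eta = \Sigma\eta - (\eta^T\Sigma\eta)^{-1}\Sigma\eta\,(\eta^T\Sigma\eta) = 0,
\end{equation*}
where the definition $C = (\eta^T\Sigma\eta)^{-1}\Sigma\eta$ and the scalar nature of $\eta^T\Sigma\eta$ are used in the cancellation. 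Hence $Z \independent \eta^T Y$, and since the endpoints $\V^-(Z)$ and $\V^+(Z)$ from \eqref{V-}--\eqref{V+} are measurable functions of $Z$, they too are independent of $\eta^T Y$.

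Next, I would condition on $Z$ and invoke Lemma~\ref{lem:polyhedral_lemma}. Fix a value $Z = z$. By the polyhedral lemma the selection event rewrites, almost surely, as $\{AY \le b\} = \{\V^-(z) \le \eta^T Y \le \V^+(z)\}$, a constraint on the target alone. By the independence just established, conditioning on $\{Z = z\}$ leaves the marginal law $\eta^T Y \sim \N(\eta^T\mu, \eta^T\Sigma\eta)$ unchanged. Therefore, conditionally on $\{Z = z\} \cap \{AY \le b\}$, the target $\eta^T Y$ follows exactly a $\N(\eta^T\mu, \eta^T\Sigma\eta)$ distribution truncated to the now-deterministic interval $[\V^-(z), \V^+(z)]$, whose cdf is precisely $F_{\eta^T\mu, \eta^T\Sigma\eta}^{[\V^-(z), \V^+(z)]}$.

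Finally, I would apply the probability integral transform, which maps any random variable with continuous cdf $F$ to a $\mathrm{Unif}\,(0,1)$ variable through $F$. Applied to the truncated Gaussian above, this yields
\begin{equation*}
F_{\eta^T\mu, \eta^T\Sigma\eta}^{[\V^-(z), \V^+(z)]}(\eta^T Y)\,\big\vert\,\{Z = z\} \cap \{AY \le b\} \sim \mathrm{Unif}\,(0,1)
\end{equation*}
for every admissible $z$. The main obstacle is that the endpoints $\V^\pm$ in the pivot are themselves random, so the transform does not apply directly to an unconditional truncated law; this is exactly what conditioning on $Z$ resolves, since freezing $z$ makes the endpoints deterministic while the resulting uniform law carries no dependence on $z$. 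Because the conditional distribution is $\mathrm{Unif}\,(0,1)$ uniformly over $z$, integrating out $Z$ against its conditional law given $\{AY \le b\}$ (the law of total probability) preserves the uniform conclusion and removes the conditioning on $Z$, giving the claim conditionally on $\{AY \le b\}$ alone.
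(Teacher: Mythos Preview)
Your argument is correct and follows the standard route: establish independence of $\eta^T Y$ and $Z$ via the vanishing covariance, condition on $Z=z$ so that Lemma~\ref{lem:polyhedral_lemma} turns the polyhedral constraint into a deterministic interval for $\eta^T Y$, apply the probability integral transform to the resulting truncated Gaussian, and then integrate out $Z$. Each step is sound.

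Regarding comparison with the paper: the paper does not supply its own proof of this theorem. It merely remarks that the result ``corresponds to Theorem~5.2 of \cite{lee}'' and cites that work. Your write-up is in fact the essence of the proof given in \cite{lee}, so there is no methodological divergence to discuss; you have simply filled in what the paper delegated to the reference.
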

This result corresponds to Theorem 5.2 of \cite{lee}, which characterises the distribution of \(\eta^T Y\) conditionally on \(Y\) belonging to the polyhedron \(\{AY\leq b\}\).

\subsection{Hilbert-Schmidt Independence Criterion}
\citeauthor{gretton} proposed the Hilbert-Schmidt independence criterion (\citeyear{gretton}) as a model-free measure for the dependence between two random variables. It relies on embedding probability measures \(\Prob\) into a reproducing kernel Hilbert space \(\Hc_k\) with associated kernel function \(k\). If \(k\) is universal, i.e. \(\Hc_k\) is dense in the space of continuous functions, then the embedding \(\Prob \mapsto \mu_k(\Prob)\) is injective.
Hence, the squared distance between \(\mu_k(\Prob)\) and \(\mu_k(\Qrob)\) (maximum mean discrepancy) forms a metric so that \(\Prob = \Qrob \Leftrightarrow \lVert \mu_k(\Prob) - \mu_k(\Qrob)\rVert^2_{\Hc_k} \!\!= 0\) for any probability measures \(\Prob\) and \(\Qrob\); see \cite{HS_embed} for further details on RKHSs. The Gaussian kernel, which is universal, is probably the most commonly used kernel, and is defined as
\begin{equation*}
        k(x, y) = \exp\left(-\frac{\lVert x-y \rVert_2^2}{2\sigma^2}\right), \quad \sigma^2>0.
\end{equation*}
A way to detect the dependence between two random variables \(X\) and \(Y\) is the comparison between the joint distribution \(\Prob_{X,Y}\) and the product of the marginals \(\Prob_X \Prob_Y\) using the maximum mean discrepancy. This metric is precisely the Hilbert-Schmidt independence criterion, which can also be defined in terms of the involved kernels as follows.
\begin{definition}
        Let \(X\) and \(Y\) be random variables, \(X'\) and \(Y'\) independent copies, and \(k\) and \(l\) be bounded kernels. The \emph{Hilbert-Schmidt independence criterion} \(\HSIC_{k,l}\) is given by
        \begin{setlength}{\multlinegap}{0pt}
        \begin{multline*}
        \HSIC_{k,l}(X,Y) =\, \Esub{X,X',Y,Y'}{k(X, X')\;l(Y,Y')}\\
        \begin{aligned}
                &+ \Esub{X,X'}{k(X,X')} \Esub{Y,Y'}{l(Y,Y')}\\
                &- 2\, \Esub{X,Y}{\Esub{X'}{k(X,X')} \Esub{Y'}{l(Y,Y')}}.
        \end{aligned}
        \end{multline*}
        \end{setlength}
\end{definition}
Several approaches for estimating the Hilbert-Schmidt independence criterion from a data sample \(\{(x_j, y_j)\}_{j=1}^n\) have been put forward. \citet{gretton} proposed a V-statistic based estimator \(\HB(X,Y)\), which is biased, whereas \citet{song} provided an unbiased U-statistic version \(\Hu(X,Y)\). Regarding the asymptotic distribution, \citet{zhang} established that both these estimators scaled by \(n^{1/2}\) converge to a Gaussian distribution if \(X\) and \(Y\) are dependent. However for \(X \independent Y\), the asymptotic distribution is not normal. Since the true dependence between \(X\) and \(Y\) is unknown, we focus on estimators that are asymptotically normal in either case.\newline
In this respect, it is helpful to express the unbiased version of the HSIC-estimator as a U-statistic of degree 4 with kernel function \(h\) provided, e.g. in Theorem 3 in \citet{song}. \citet{zhang} and \citet{aistats} suggested the following estimators.
\begin{definition}\label{def:block_inc}
        Let \(B\in\Nat\) and subdivide the data into folds of size \(B\), \(\{\{(x_i^b,y_i^b)\}_{i=1}^B\}_{b=1}^{n/B}\). The \emph{block estimator} \(\HBl\) with block size \(B\) is given by
        \begin{equation}\label{eq:block}
        \HBl(X, Y) = \frac{1}{n/B}\sum_{b=1}^{n/B}\Hu(X^b,Y^b),
        \end{equation}
        where \(\Hu(X^b,Y^b)\) denotes the unbiased estimate on the data \(\{(x_i^b,y_i^b)\}_{i=1}^B\). Let \(\Sk_{n,4}\) be the set of all 4-subsets of \(\{1,\ldots,n\}\) and let \(\D\) be a multiset containing \(m\) elements of \(\Sk_{n,4}\) randomly chosen with replacement. Further, suppose \(m = \OL(n)\) and define \(l := \lim_{n,m\to\infty} m/n\). The \emph{incomplete U-statistic estimator} \(\Hinc\) of size \(l\) is defined by
        \begin{equation}\label{eq:inc}
        \Hinc(X,Y) = \frac{1}{m}\sum_{\mathmakebox[30pt][c]{(i,j,q,r)\in\D}} h(i,j,q,r).
        \end{equation}
\end{definition}
Both estimators are asymptotically normal.
\begin{theorem}\label{HSIC-CLT}
        Let \(\{(x^{(1)}_j,\dotsc,x^{(p)}_j,y_j)\}_{j=1}^n\) be an i.i.d. data sample and define \(H_0=(\HSIC(X^{(1)},Y),\dotsc\linebreak[1]\dotsc,\HSIC(X^{(p)},Y))^T\), and \(\HvBl\) and \(\Hvinc\) accordingly. Assume that \(B\) and \(l\) are the same for all entries of \(\HvBl\) and \(\Hvinc\), respectively, let \(n/B\to \infty\) and choose \(\D\) for all elements of \(\Hvinc\) independently. Then
        \begin{align*}
                \sqrt{n/B} \big(\HvBl - H_0\big) &\toD \N(0, \Sigma_\textnormal{block}),\\ %\label{block_normal}\\
                \sqrt{m} \big(\Hvinc - H_0\big) &\toD \N(0, \Sigma_\textnormal{inc}), %\label{inc_normal}
        \end{align*}
        with positive definite matrices \(\Sigma_\textnormal{block}\) and \(\Sigma_\textnormal{inc}\).
\end{theorem}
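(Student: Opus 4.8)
The plan is to reduce this case to a classical i.i.d.\ central limit theorem. Because the folds $\{(x_i^b,y_i^b)\}_{i=1}^B$ are disjoint and the sample is i.i.d., the vectors $V_b := \big(\Hu(X^{(1),b},Y^b),\dots,\Hu(X^{(p),b},Y^b)\big)^T$ for $b=1,\dots,n/B$ are i.i.d.\ across $b$. Since $\Hu$ is unbiased for $\HSIC$, each $V_b$ has mean $H_0$, and boundedness of the kernels $k,l$ makes $h$ — hence every coordinate of $V_b$ — bounded, so $\Sigma_\textnormal{block}:=\cov{V_1}{V_1}$ is finite. As $\HvBl=(n/B)^{-1}\sum_b V_b$ is a sample mean of i.i.d.\ vectors, the multivariate CLT (via the Cramér--Wold device applied to $c^T V_b$ for arbitrary $c\in\R^p$) yields $\sqrt{n/B}(\HvBl-H_0)\toD\N(0,\Sigma_\textnormal{block})$ as $n/B\to\infty$. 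Note that the folds are shared across features, so the coordinates of $V_1$ are genuinely correlated and $\Sigma_\textnormal{block}$ is a full matrix; positive definiteness holds provided no nontrivial combination $\sum_k c_k\Hu(X^{(k),1},Y^1)$ is almost surely constant, which I would record as a mild non-degeneracy condition.

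\textbf{Incomplete estimator.} This is the substantive part. Write $\theta_k:=\HSIC(X^{(k)},Y)$, let $\Hinc^{(k)}:=\Hinc(X^{(k)},Y)$, let $h^{(k)}$ be the degree-4 kernel, let $U_n^{(k)}$ denote the \emph{complete} U-statistic, and let $\mathcal{F}_n$ be the $\sigma$-algebra generated by the sample. I would use the decomposition
\[
\sqrt{m}\big(\Hinc^{(k)}-\theta_k\big)
= \underbrace{\sqrt{m}\big(\Hinc^{(k)}-U_n^{(k)}\big)}_{\text{resampling}}
+ \underbrace{\sqrt{m/n}\,\sqrt{n}\big(U_n^{(k)}-\theta_k\big)}_{\text{U-statistic}} .
\]
Both terms are $O_P(1)$ precisely because $m=\OL(n)$, so both contribute to the limit. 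Conditionally on $\mathcal{F}_n$, the resampling term is an average of $m$ i.i.d.\ bounded summands $h^{(k)}(S)-U_n^{(k)}$ indexed by the random $4$-subsets $S\in\D^{(k)}$, so a conditional Lindeberg CLT gives convergence to $\N(0,\tau_k^2)$, where $\tau_k^2$ is the $\mathcal{F}_n$-limit of $\var{h^{(k)}(S)\mid\mathcal{F}_n}$; a U-statistic law of large numbers identifies this limit as the unconditional variance $\var{h^{(k)}(X_1,X_2,X_3,X_4)}$. Crucially, since the multisets $\D^{(k)}$ are drawn independently across features, the resampling terms are conditionally independent, so their joint limit is $\N(0,D)$ with $D=\mathrm{diag}(\tau_1^2,\dots,\tau_p^2)$.

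For the U-statistic term I would apply the Hoeffding decomposition: $\sqrt{n}(U_n^{(k)}-\theta_k)\toD\N(0,16\,\zeta_1^{(k)})$, the factor $16=4^2$ arising from the degree, and jointly across $k$ (again by Cramér--Wold) the vector converges to $\N(0,16\,\Sigma_1)$, where $\Sigma_1$ collects the covariances of the first-order projections $h_1^{(k)}$. Because the resampling noise is conditionally centred given $\mathcal{F}_n$ while the U-statistic term is $\mathcal{F}_n$-measurable, the two limits are asymptotically independent, and combining them with $m/n\to l$ gives
\[
\sqrt{m}\big(\Hvinc-H_0\big)\toD \N\!\big(0,\ \Sigma_\textnormal{inc}\big),\qquad \Sigma_\textnormal{inc}=D+16\,l\,\Sigma_1 .
\]
Pleasingly, positive definiteness is then automatic: $16\,l\,\Sigma_1$ is positive semidefinite and $D$ is strictly positive definite as soon as each $h^{(k)}$ is non-degenerate, so $\Sigma_\textnormal{inc}\succ 0$ with no further assumption.

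\textbf{Main obstacle.} The delicate step is the incomplete case: making the conditional-CLT argument rigorous — showing $\var{h^{(k)}(S)\mid\mathcal{F}_n}\toP$ a deterministic limit, verifying the conditional Lindeberg condition, and upgrading conditional convergence together with the $\mathcal{F}_n$-measurable U-statistic term to a genuine \emph{joint} unconditional CLT with the stated independence. I would handle this through a characteristic-function argument conditioning on $\mathcal{F}_n$, taking expectations and invoking dominated convergence with boundedness of $h$ supplying the domination, or alternatively by directly citing a central limit theorem for incomplete U-statistics in the regime $m=\OL(n)$ and verifying its hypotheses.
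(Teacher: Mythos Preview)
Your proposal is essentially the same as the paper's proof: both dispatch the block case via the multivariate CLT for i.i.d.\ vectors, and both handle the incomplete case by the decomposition $\sqrt{m}(\Hvinc-H_0)=\sqrt{m}(\Hvinc-U_n)+\sqrt{m/n}\,\sqrt{n}(U_n-H_0)$, treating the resampling piece conditionally on the data and the complete U-statistic piece via a multidimensional U-statistic CLT, then combining through a characteristic-function argument with dominated convergence. The only presentational differences are that the paper writes the resampling term via a multinomial representation $\sum_S Z_S(h(S)-H)$ and invokes a dedicated auxiliary lemma for its CLT (rather than viewing it as an i.i.d.\ average), and that it explicitly separates out the index set $I=\{i:\HSIC(X^{(i)},Y)>0\}$ on which the complete U-statistic is non-degenerate---a point you absorb by noting $\Sigma_1$ is merely positive semidefinite while $D\succ 0$.
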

The first statement is a direct consequence of the multidimensional central limit theorem; to prove the second asymptotic result, we use the framework of U-statistics. The details are deferred to the supplementary material.
%\textcolor{red}{Here also, it would be nice to have one comment line about this result: where can we find it?}

\section{PSI for HSIC-Lasso}
This section contains our main theoretical results for PSI with HSIC-Lasso and addresses the difficulties arising in practical applications.
\subsection{Feature Selection with HSIC-Lasso}
\citeauthor{hsic_lasso} introduced the model-free HSIC-Lasso (\citeyear{hsic_lasso}) feature selection method as follows.
\begin{definition}\label{def:hsic_lasso}
        Let \(\{(x^{(1)}_j,\dotsc,x^{(p)}_j,y_j)\}_{j=1}^n\) be an i.i.d. data sample, \(k\) and \(l\) be kernels, let \(K\) and \(L\) be given by \(K_{ij} = k(x_i,x_j)\) and \(L_{ij} = l(y_i, y_j)\) for \(i,j\in\{1,\ldots,n\}\) and set \(\Gamma = \Id - \frac{1}{n}\one\one^T\). Denoting \(\bar{L} = \Gamma L \Gamma\) and \(\bar{K}^{(s)} = \Gamma K^{(s)} \Gamma\), \(\bh\) is given by
        \begin{equation*}
                \hat{\beta} = \argmin_{\beta\in\R^p_+} \;
                \frac{1}{2}\lVert \Bar{L}-\sum_{s=1}^p \beta_s\Bar{K}^{(s)}\rVert_{\text{Frob}}^2 + \lambda \lVert \beta\rVert_1,
        \end{equation*}
        where \(\lambda >0 \) and \(\R_+ = [0,\infty)\). The \emph{HSIC-Lasso selection procedure} is defined by \(\Sh=\{s\colon\bh_s > 0\}\).
\end{definition}
For our purposes, however, we consider the following alternative representation of \(\bh\):
\begin{multline}
        \hat{\beta} = \argmin_{\beta\in\R^p_+} \; -
        \sum_{s=1}^p\beta_s\, \HB(X^{(s)},Y)\\
        + \frac{1}{2} \sum_{s,r=1}^p \beta_s\beta_r\,\HB(X^{(s)},X^{(r)}) + \lambda \lVert \beta\rVert_1.\label{eq:hl_alt}
\end{multline}
It becomes apparent that feature selection is driven by three competing components. Considering the first and third term together, we notice that covariates with high dependence to the response achieve positive \(\bh\)-values, whereas the \(\bh\)-entries of non-influential features are forced to zero by the regularisation term. Moreover, the second term penalises %\textcolor{red}{maybe a word other than "punishes" would be more suitable}
the selection of covariates showing high dependence on other features.\newline
Taking (\ref{eq:hl_alt}) as a starting point, we replace the biased V-statistic based estimators with asymptotically normal ones and allow for a more general weighted Lasso-penalty.
%Taking (\ref{eq:hl_alt}) as a starting point, we replace the biased estimators with asymptotically normal ones, keeping the assumptions of Lemma \ref{lem:polyhedral_lemma} in mind. Moreover, we use a weighted Lasso-penalty to keep our arguments simple for our next considerations.
\begin{definition}\label{def:normal_hsic_lasso}
        Let \(\textnormal{H}\) be an asymptotically Gaussian and \(\tilde{\textnormal{H}}\) be any HSIC-estimator and define \(H\) and \(M\) by \(H_s = \textnormal{H}(X^{(s)},Y)\), \(M_{sr} = \tilde{\textnormal{H}}(X^{(s)}, X^{(r)})\) for \(s,r\in\{1,\ldots,p\}\). The \emph{normal weighted HSIC-Lasso selection procedure} is given by \(\Sh=\{s\colon\bh_s > 0\}\) with
        \begin{equation}
                \hat{\beta} = \argmin_{\beta\in\R^p_+} \; -
                \beta^T H + \frac{1}{2}\,\beta^T M \beta + \lambda\, \beta^T\! w,
                \label{eq:wn_hsic-lasso}
        \end{equation}
        where \(w\in\R_+^p\) is a fixed weight vector.
\end{definition}
Using the asymptotically normal response \(H\), the framework of the polyhedral lemma can be applied; however, we need to provide an asymptotic pivot.
%however, an asymptotic version of Theorem \ref{theo:polyhedral_lemma} is needed.
\begin{theorem}\label{th:asymp_pivot}
        Let \((H_n)_{n\in\Nat}\), \((M_n)_{n\in\Nat}\) and \((\Shat_n)_{n\in\Nat}\) be sequences of random vectors and matrices, respectively, such that \(H_n \to \N(\mu, \Sigma)\) in distribution, \(M_n\to M\), \(\Shat_n \to \Sigma\) almost surely and \(\Shat_n \independent (H_n, M_n)\). For a selected model \(S\) and \(\St \subseteq S\), let \(\eta_\St\), \(A_\St\) and \(b_\St\) be a.s. continuous functions of \(M\) and assume that the selection events are given by \(\{A_\St(M_n)H_n\leq b_\St(M_n)\}\) and that \(\interior(\{A_\St(M)H\leq b_\St(M)\}) \neq \emptyset\). Then
        \begin{equation}
                F_{\eta_n^T\mu,\eta_n^T\Shat_n\eta_n}^{[\V^-(Z_n),\V^+(Z_n)]}(\eta_n^T H_n)\vert \{A_n H_n\leq b_n\} \toD \mathrm{Unif}\,(0,1),
                \label{eq:asymp_pivot}
        \end{equation}
        as \(n\to\infty\) where \(\eta_n =\eta_\St(M_n)\), \(A_n = A_\St(M_n)\) and \(b_n = b_\St(M_n)\).
\end{theorem}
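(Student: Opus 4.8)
The plan is to reduce the asymptotic statement to the exact, finite-sample polyhedral result of Theorem~\ref{theo:polyhedral_lemma}, applied at the deterministic limit $(H,M,\Sigma)$ with $H\sim\N(\mu,\Sigma)$, and then to transfer the resulting conditional uniformity by a continuous-mapping-plus-Portmanteau argument. Throughout write $\eta=\eta_\St(M)$, $A=A_\St(M)$, $b=b_\St(M)$, $C=(\eta^T\Sigma\eta)^{-1}\Sigma\eta$ and $Z=(\Id-C\eta^T)H$, let $W_n$ denote the statistic on the left of \eqref{eq:asymp_pivot} and $I_n:=\ind\{A_nH_n\le b_n\}$ the selection indicator, and write $W$ and $I:=\ind\{AH\le b\}$ for the analogous limiting quantities, so that the target is $\Prob(W_n\le t\mid I_n=1)\to t$ for all $t\in(0,1)$.

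First I would establish joint convergence. Since $M_n\to M$ and $\Shat_n\to\Sigma$ almost surely with \emph{deterministic} limits, we have $(M_n,\Shat_n)\toP(M,\Sigma)$, while $H_n\toD H$; Slutsky's theorem then yields $(H_n,M_n,\Shat_n)\toD(H,M,\Sigma)$. (The assumed independence $\Shat_n\independent(H_n,M_n)$ makes the decoupling of the plugged-in variance transparent, although a.s. convergence to a constant already suffices.) The a.s. continuity of $\eta_\St,A_\St,b_\St$ then gives $\eta_n\to\eta$, $A_n\to A$, $b_n\to b$, and hence $C_n\to C$ and $Z_n\to Z$.

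Next I would package the pivot and the selection event into a single map $\Psi(h,m,\sigma)=\bigl(F^{[\V^-(z),\V^+(z)]}_{\eta^T\mu,\,\eta^T\sigma\eta}(\eta^T h),\,\ind\{Ah\le b\}\bigr)$, where $\eta,A,b$ are the images of $m$ and $z=(\Id-C\eta^T)h$, so that $(W_n,I_n)=\Psi(H_n,M_n,\Shat_n)$. Its discontinuity set lies in the polyhedral boundary $\{h:Ah=b \text{ in some coordinate}\}$ together with the degenerate loci $\{\V^-(z)=\V^+(z)\}$ and $\{\eta^T\sigma\eta=0\}$. Because $\Sigma$ is positive definite (Theorem~\ref{HSIC-CLT}), $H$ has a Lebesgue density, so the polyhedral boundary is $H$-null and $\eta^T\Sigma\eta>0$; moreover $\interior(\{AH\le b\})\ne\emptyset$ guarantees $\Prob(AH\le b)>0$, while in the Gaussian limit $\cov{Z}{\eta^TH}=0$ forces $Z\independent\eta^TH$, so conditioning on $Z$ shows that $\{\V^-(Z)=\V^+(Z)\}$ carries no mass on the selection event and the truncated-Gaussian cdf is continuous there. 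The genuinely delicate point, which I expect to be the main work, is the continuity of $\V^\pm$ through the index sets $\{j:(AC)_j\lessgtr0\}$ in \eqref{V-}--\eqref{V+}: this is resolved by observing that at the deterministic limit the signs of $(AC)_j$ are fixed, so for $n$ large the sign patterns of $(A_nC_n)_j$ stabilise and the two maxima converge. Hence $\Psi$ is continuous at $(H,M,\Sigma)$ off an $H$-null set, the continuous mapping theorem gives $(W_n,I_n)\toD(W,I)$, and by Theorem~\ref{theo:polyhedral_lemma} the limit satisfies $W\mid\{I=1\}\sim\mathrm{Unif}\,(0,1)$.

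Finally I would pass from this joint weak convergence to convergence of the \emph{conditional} law, which is the crux, since conditioning is not continuous under weak convergence in general; the nonempty-interior hypothesis is exactly what rescues it. For $t\in(0,1)$ write $\Prob(W_n\le t\mid I_n=1)=\Prob(W_n\le t,\,I_n=1)/\Prob(I_n=1)$, and apply the Portmanteau theorem to the boxes $(-\infty,t]\times[\tfrac12,\tfrac32)$ and $\R\times[\tfrac12,\tfrac32)$. Their boundaries are $(W,I)$-null, because $\{I=\tfrac12\}\cup\{I=\tfrac32\}$ has probability zero and $\Prob(W=t,I=1)=0$ since $W$ is uniform given $I=1$; thus both numerator and denominator converge. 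As the denominator tends to $\Prob(AH\le b)>0$, the ratio converges to $\Prob(W\le t\mid I=1)=t$, and since this holds for every $t\in(0,1)$ the conditional law of $W_n$ converges to $\mathrm{Unif}\,(0,1)$, which is \eqref{eq:asymp_pivot}.
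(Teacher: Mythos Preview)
Your proposal is correct and follows essentially the same route as the paper: joint convergence of $(H_n, M_n, \Shat_n)$, almost-sure continuity of the truncated-Gaussian pivot (handling the null sets for $\V^\pm$ and the polyhedron boundary), passage to the conditional law via a ratio of probabilities with strictly positive denominator (secured by the nonempty-interior assumption), and Theorem~\ref{theo:polyhedral_lemma} at the limit. The only cosmetic difference is the order of operations---the paper first establishes $(H_n, M_n, \Shat_n)\mid\{A_nH_n\le b_n\}\toD(H,M,\Sigma)\mid\{AH\le b\}$ and then applies the continuous mapping theorem to the pivot, whereas you apply the CMT to the pair $(W_n,I_n)$ first and then condition via Portmanteau on boxes.
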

This statement is tailored to selection with normal weighted HSIC-Lasso and generalises Theorem~\ref{theo:polyhedral_lemma} as it relaxes the requirements of a normal response and known covariance \(\Sigma\): Under the assumption of an asymptotically Gaussian response and a consistent estimator for \(\Sigma\), we obtain an asymptotic pivot.\newline
To prove this result, we show that \((H_n, M_n, \Shat_n)\vert\{A_n H_n\leq b_n\} \to (H, M, \Sigma)\vert\{A H\leq b\}\) in distribution and then apply the continuous mapping theorem to the a.s. continuous function \(F\). The details of the proof can be found in the supplement.
%\textcolor{red}{Since this is one of your theoretical result, you need to report some remarks: key proof steps; the overall interpretation.}
\subsection{Inference Targets and Selection Event}
We now define the inference targets and characterise the selection events of the normal weighted HSIC-Lasso.
%\textcolor{red}{"according selection event" is not clear}
%\textcolor{red}{this expression belonging selection events may not be clear, could you rephrase?}
To do so, we first introduce the following notations. For any matrix \(B\in\R^{q\times q}\), \(v\in\R^q\) and index sets \(I, J\subset\{1,\ldots,q\}\), we define \(I^c := \{1,\ldots,q\}\setminus I\). Moreover, \(v_I\) contains all entries at positions in \(I\) and \(B_{IJ} \in\R^{\lvert I\rvert\times\lvert J\rvert}\) is given by the rows and columns of \(B\) whose indices are in \(I\) and \(J\), respectively.\newline
For a selected model \(S\) and \(j\in S\), we consider the HSIC-target \(H_j := \e_j^T H\) and the partial target \(\bpar_{j,S} := \e_j^T M_{SS}^{-1} H_S\), where \(\e_j\) denotes the \(j\)-th unit vector. The former target describes the dependence between response \(Y\) and feature \(X^{(j)}\). In the same spirit of a partial regression coefficient, the latter can be interpreted as the degree of influence of \(X^{(j)}\) on \(Y\) adjusted to the dependence structure among the covariates. Expressing both targets in the form of \(\eta_S^T H\), the respective \(\eta\)-vectors for the HSIC- and partial target are \(\e_j\) and \((M_{SS}^{-1}\,\vert\,0)^T\e_j\).\newline
We notice that the HSIC-target is influenced by the selection information \(\{j\in\Sh\}\) only, whereas the partial target is, by definition, affected by the entire chosen set of covariates \(\{\Sh = S\}\). We characterise these selection events as follows.
%Hence, the objective consists in deriving affine linear characterisations of these events together with the %\textcolor{red}{would "corresponding" be more appropriate instead of "accompanying"?}
%corresponding truncation points.
\begin{theorem}\label{th:trunc_points}
        Assume the same framework as in Definition \ref{def:normal_hsic_lasso}, suppose that \(M\) is positive definite and let \(\eta\in\R^p\). Then \({\{\hat{S} = S\}} =\{A\, (H_S, H_{S^c})^T \leq b\}\) holds, where
        \begin{equation}\label{eq:aff_lin_partial}
                A = -\frac{1}{\lambda}\left(
                \begin{smallmatrix}
                M_{SS}^{-1} &\vert& 0\,\\
                M_{S^cS}M_{SS}^{-1}&\vert& \Id\,
                \end{smallmatrix}\right),\quad
                b = \left(
                \begin{smallmatrix}
                -M_{SS}^{-1}\,w_S\\
                w_{S^c}-M_{S^cS}M_{SS}^{-1}\,w_S
                \end{smallmatrix}\right),
        \end{equation}
        and \(0\) denotes a matrix of size \(\lvert S\rvert\times\lvert S^c\rvert\) filled with zeros. Moreover, \(\{j\in \hat{S}\}=\{A H\leq b\}\) holds for
        \begin{equation}\label{eq:aff_lin_H}
            A = -\,\e_j^T,\quad
            b = -\,\e_j^T M \bh_{-j}\! - \lambda w_j,
        \end{equation}
        where \(\bh_{-j}\!\) denotes \(\bh\) with the \(j\)-th entry set to zero.
\end{theorem}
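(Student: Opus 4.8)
The plan is to exploit that \eqref{eq:wn_hsic-lasso} is a strictly convex quadratic program: since $M$ is positive definite the objective $f(\beta) = -\beta^T H + \tfrac12\,\beta^T M \beta + \lambda\,\beta^T w$ is strictly convex, so it has a unique minimiser $\bh$ over the convex cone $\R^p_+$, and the event $\{\Sh = S\}$ is well defined. Because the non-negativity constraint turns the $\ell^1$-penalty into the linear term $\lambda\,\beta^T w$, the only non-smoothness stems from $\beta \ge 0$, and the minimiser is characterised by the Karush--Kuhn--Tucker conditions: writing $\nabla f(\bh) = M\bh - H + \lambda w$, optimality over $\R^p_+$ is equivalent to $\nabla f(\bh) \ge 0$, $\bh \ge 0$ and the complementary slackness $\bh_s\,(\nabla f(\bh))_s = 0$ for every $s$.

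First I would derive the characterisation of $\{\Sh = S\}$. On the active set complementary slackness forces $(\nabla f(\bh))_S = 0$; since $\bh_{S^c}=0$ this reads $M_{SS}\bh_S = H_S - \lambda w_S$, hence $\bh_S = M_{SS}^{-1}(H_S - \lambda w_S)$, using that $M\succ 0$ makes the principal submatrix $M_{SS}$ invertible. The requirement $\bh_S > 0$ becomes the first block of inequalities. On the complement, dual feasibility $(\nabla f(\bh))_{S^c} \ge 0$ together with $(M\bh)_{S^c} = M_{S^cS}\bh_S$ yields the second block after substituting the closed form for $\bh_S$; rearranging and dividing by $\lambda > 0$ collects both blocks into the affine system $A\,(H_S,H_{S^c})^T \le b$ of \eqref{eq:aff_lin_partial}. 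For the converse inclusion I would run the argument backwards: given a point of the polyhedron, define $\bh$ by the closed form on $S$ and zero on $S^c$; the two blocks certify that this vector satisfies the KKT conditions, and strict convexity identifies it as the unique minimiser, whose active set is exactly $S$.

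For the single-feature event $\{j \in \Sh\}$ I would isolate the coordinate-$j$ optimality condition. If $\bh_j > 0$ then $(\nabla f(\bh))_j = 0$; writing $\bh = \bh_{-j} + \bh_j\,\e_j$ and using $M_{jj} > 0$, the relation $M_{jj}\bh_j = H_j - \lambda w_j - \e_j^T M \bh_{-j}$ forces $H_j > \e_j^T M \bh_{-j} + \lambda w_j$. Conversely, if $\bh_j = 0$ then $\bh = \bh_{-j}$ and dual feasibility gives the reverse inequality. Hence $j \in \Sh$ is equivalent to $-\e_j^T H \le -\e_j^T M \bh_{-j} - \lambda w_j$, i.e. \eqref{eq:aff_lin_H}; note that $b$ genuinely depends on $\bh_{-j}$, which is resolved in the conditional framework by additionally conditioning on the remaining coefficients so that $b$ is fixed on the event.

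The main obstacle is the passage from the KKT inequalities to an exact equality of events. The delicate points are (i) the converse inclusion, which needs the uniqueness granted by $M \succ 0$ to rule out that a KKT-feasible vector has a strictly larger or smaller active set, and (ii) the mismatch between the strict inequalities defining $\{\Sh = S\}$ (namely $\bh_S > 0$) and the non-strict $\le$ of the polyhedron: the two sets differ only on the boundary where some coordinate vanishes or an inactive constraint is tight, which carries probability zero under the absolutely continuous (asymptotically Gaussian) law of $H$, so the identity holds almost surely. The self-referential appearance of $\bh_{-j}$ in \eqref{eq:aff_lin_H} is the analogous subtlety for the second statement and is handled by the same conditioning device.
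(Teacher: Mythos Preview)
Your proposal is correct and follows essentially the same KKT-based route as the paper: derive $\bh_S = M_{SS}^{-1}(H_S-\lambda w_S)$ from stationarity on the active set, combine $\bh_S>0$ with dual feasibility on $S^c$ to get the two blocks of \eqref{eq:aff_lin_partial}, and for \eqref{eq:aff_lin_H} isolate the $j$-th KKT equation and use $M_{jj}>0$. If anything, you are more careful than the paper's own argument, which only spells out the forward inclusion and does not discuss the strict-versus-nonstrict boundary or the self-referential $\bh_{-j}$ in $b$; your treatment of these points (uniqueness from $M\succ 0$ for the converse, and the null-set boundary under an absolutely continuous law of $H$) is exactly what is needed to turn the KKT calculation into an almost-sure equality of events.
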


%\begin{theorem}\label{th:trunc_points}
%       Assume the same framework as in Definition \ref{def:normal_hsic_lasso}, suppose that \(M\) is positive definite and let \(\eta\in\R^p\). Then \({\{\hat{S} = S\}} =\{A\, (H_S, H_{S^c})^T \leq b\}\) with
%       \begin{equation*}\label{eq:trunc_S}
%       A = -\frac{1}{\lambda}\left(
%       \begin{smallmatrix}
%       M_{SS}^{-1} &\vert& 0\,\\
%       M_{S^cS}M_{SS}^{-1}&\vert& \Id\,
%       \end{smallmatrix}\right),\quad
%       b = \left(
%       \begin{smallmatrix}
%       -M_{SS}^{-1}\,w_S\\
%       w_{S^c}-M_{S^cS}M_{SS}^{-1}\,w_S
%       \end{smallmatrix}\right),
%       \end{equation*}
%       where \(0\) denotes a matrix of size \(\lvert S\rvert\times\lvert S^c\rvert\) filled with zeros. The respective truncation points \(\V^-_S\) and \(\V^+_S\) are given by the polyhedral Lemma \ref{lem:polyhedral_lemma}. Moreover, the truncation points for the event \(\{j\in \hat{S}\}\) are
%       \begin{equation*}\label{eq:trunc_j}
%       \V^-_j(Z) = \frac{1}{\e_j^T C}\left[\e_j^T M\hat{\beta}_{-j} - \e_j^T Z + \lambda w_j\right],
%       \V^+_j(Z) = \infty,
%       \end{equation*}
%       where \(C = (\eta^T\Sigma\eta)^{-1}\Sigma\eta\) and \(Z = (\Id - C \eta^T)H\).
%\end{theorem}
To prove these statements, we use the Karush-Kuhn-Tucker (KKT) conditions, which characterise the solution of (\ref{eq:wn_hsic-lasso}) by a set of inequalities. Manipulating these, we obtain the affine linear representation of \({\{\hat{S} = S\}}\) and \(\{j\in \hat{S}\}\). The details can be found in the supplementary material.\newline
Theorem \ref{th:trunc_points} is the key result that allows us to carry out post-selection inference with HSIC-Lasso. In summary, we have to consider the distributions \({\e_j^T H\,\vert\,\{j\in\Sh\}}\) and
\(\e_j^T (M_{SS}^{-1}H\,\vert\,0)\,\vert\,\{\Sh = S\}\) for the HSIC- and partial target, respectively, in order to account for the selection. With the affine linear representations (\ref{eq:aff_lin_partial}) and (\ref{eq:aff_lin_H}), we can apply Theorem \ref{th:asymp_pivot} and get an asymptotic pivot for inference.
% Key proof steps: apply KKT conditions -> get bunch of inequalities -> regroup
% How does it come together?
% Summary: for H- and partial-target we have eta-vectors; affine linear characterisation/truncation points; everything for asymptotic pivot according to Theo 9
%\textcolor{red}{You need to reformulate your comments after this result. This is important to emphasize this result: key proof steps; how do you interpret $\lambda$: you implicitely assume $\lambda>0$, so for instance how sensitive is your linear representation; it would also be better to state the dimensions of "0 denotes a matrix filled with zeros".}
%\begin{remark}
%       The selection event \(\{j\in\Sh\}\) has an affine linear characterisation as well; however, it is easier to directly derive the truncation points.
%\end{remark}
\subsection{Practical Applications}
Equipped with these theoretical results, we now propose an algorithm that handles difficulties arising in practical applications.%\newline\newline

\subsubsection{Positive Definiteness}
Theorem \ref{th:trunc_points} requires \(M\) to be positive definite. For the original version of HSIC-Lasso (\ref{eq:hl_alt}), this condition is always fulfilled by the structure of the biased HSIC-estimates. However, there is no guarantee for other estimation procedures. For this reason, we project \(M\) onto the space of positive definite matrices,
as proposed by \citet{higham}: The spectral decomposition of \(M\) is computed and all negative eigenvalues are replaced with a small positive value \(\eps > 0\).

\subsubsection{Computational Costs}
HSIC-Lasso is frequently applied to high-dimensional data where the number of covariates \(p\) exceeds the sample size \(n\). The resulting high computational costs are caused by the calculation of the HSIC-estimates, where \(H\) grows as \(\OL(p)\) and \(M\) as \(\OL(p^2)\). Therefore, we introduce an upstream screening stage identifying a subset of potentially influential features so that HSIC-Lasso only has to deal with these. Following the approach of \citet{yamada2018}, we compute the HSIC-estimates \(\Hest(Y,X^{(j)}), j\in\{1,\ldots,p\}\), and select a pre-fixed number \(p' < p\) of the covariates having the highest estimates. We call this HSIC-ordering. %We refer to this procedure as HSIC-ordering.
\newline
In order to ensure valid inference results, we have to adjust for the screening step as well because it affects feature selection. To do so, we split the data into two folds, one dedicated to screening, the other dedicated to HSIC-Lasso selection among the screened variables, cf. \cite{sample_splitting}. Thus, potentially distorting effects of the screening step are separated from inference on the second fold.
Moreover, unbiased HSIC-estimates can be used for screening, which are more precise than block or incomplete U-statistic estimates.
\begin{remark}
    In future applications, random Fourier features could be used to speed up the kernel computation of the objective function (\ref{eq:hl_alt}), allowing for a larger $p'$. However, it is not immediately clear whether we can recover the theoretical guarantees for our method when using approximated kernel functions.
\end{remark}

\subsubsection{Hyper-parameter Choice}
In practice, a suitable choice of the regularisation parameter \(\lambda\) and the weight vector \(w\) is key for meaningful results. Since the data generating process is unknown, we have to estimate these hyper-parameters. In order to prevent this from affecting inference results, we use the first fold for hyper-parameter selection.
%Similar to screening, this effects feature selection and has to be accounted for to achieve valid PSI. In the same spirit as sample splitting, we can use the first fold for hyper-parameter selection as well.
In doing so, we can apply any estimation method for \(\lambda\), such as cross-validation, e.g. \cite{cross_val}, or the AIC, cf. \cite{AIC}, and get a valid procedure that is easy to implement. Moreover, we can employ \citeauthor{adaptive_lasso}'s adaptive Lasso penalty (\citeyear{adaptive_lasso}), that uses the weight vector \(w = 1 / \lvert\hat{\beta} \rvert^\gamma\). \(\gamma\) is typically set to \(1.5\) or \(2\) and \(\hat{\beta}\) is a \(\sqrt{n}\,\)-consistent estimator, e.g. the ordinary least squares estimator. Contrary to the vanilla Lasso, this method satisfies the oracle property, that is the sparsity-based estimator recovers the true underlying sparse model and has an asymptotically normal distribution. Yet, this property was only proven for a covariance matrix of the form \(\Sigma = \sigma^2\Id\). Hence, we have to evaluate the usefulness of the adaptive Lasso in empirical simulations.

\subsubsection{Algorithm}\label{sec:algorithm}
We summarise our proposed PSI method for a normal (weighted) HSIC-Lasso selection procedure as follows. To begin with, we split the data into two subsets. On the first fold, we compute the HSIC-estimates between all covariates and the response, determine the most influential \(p'\) features (screening) and estimate the hyper-parameters \(\lambda\) and \(w\) with the methods previously specified. On the second fold, we compute the estimates of \(H\), \(M\) and \(\Shat\) for the screened features and solve the optimisation problem (\ref{eq:wn_hsic-lasso}). For all \(j\in\{1,\ldots,p\}\) such that \(\bh_j > 0\), we find the truncation points for the specified targets with Theorem \ref{th:trunc_points} and Lemma \ref{lem:polyhedral_lemma} and test with the asymptotic pivot (\ref{eq:asymp_pivot}) if the targets are significant at a given level \(\alpha\). The supplementary material contains a more detailed description of the algorithm in pseudocode.
\begin{remark}
        Although Theorem \ref{th:asymp_pivot} requires independence between the covariance- and the \((H,M)\)-estimate we could not observe any detrimental influence if \(\Shat\) is computed as outlined above.
\end{remark}

% cross-validation, non-adaptive worked best

\section{Experiments}
In this section, we illustrate our theoretical contribution and the proposed algorithm on artificial and real-world data. The source code for the experiments was implemented in Python and relies on \citeauthor{aistats}'s \texttt{mskernel}-package (\citeyear{aistats}). We use the Lasso optimisation routines of \texttt{scikit-learn} which implements the cyclical gradient descent algorithm, cf. \cite{coordinate_descent}, and the least angle regression algorithm (LARS) (\citeyear{lar}), proposed by \citeauthor{lar}. The source code for the following experiments is available on \href{https://github.com/tobias-freidling/hsic-lasso-psi}{Github}: \texttt{tobias-freidling/hsic-lasso-psi}.

\subsection{Artificial Data}
We examine the achieved type-I error of the proposed algorithm, compare its power to other approaches for post-selection inference and briefly discuss additional experiments.\newline
For continuous data, we use Gaussian kernels where the bandwidth parameter is chosen according to the median heuristic, cf. \cite{median-heuristic}; for discrete data with $n_c$ samples in category $c$, we apply the normalised delta kernel which is given by
\begin{equation*}
l(y, y') :=
        \begin{cases}
                1/n_c, &\text{if }\, y=y'\!=c,\\
                0, &\text{otherwise}.
        \end{cases}
\end{equation*}
Moreover, we use a quarter of the data for the first fold, select the hyper-parameter $\lambda$ applying 10-fold cross-validation with MSE, use a non-adaptive Lasso-penalty and do not conduct screening as the number of considered features is already small enough. On the second fold, we estimate \(M\) with the block estimator of size \(B=10\) as it is computationally less expensive than the unbiased estimator and leads to similar results. The covariance matrix \(\Sigma\) of \(H\) is estimated based on the summands of the block (\ref{eq:block}) and incomplete U-statistic (\ref{eq:inc}) estimator, respectively. To this end, we use the oracle approximating shrinkage (OAS) estimator (\citeyear{oas}), which was presented by \citeauthor{oas} and is particularly tailored for high-dimensional Gaussian data. We fix the significance level at \(\alpha=0.05\) and simulate 100 datasets for each considered sample size.

\subsubsection{Type-I Error}
In order to simulate the achieved type-I error we use the toy models
\begin{align*}
    &\text{(M1)}\quad
    \begin{gathered}
        Y\sim \textnormal{Ber}\Big(g\big(\textstyle\sum_{i=1}^{10} X_i\big)\Big),\quad  X \sim \N(0_{50},\Xi),\\
        g(x) = \e^x/(1+\e^x),
    \end{gathered}\\[0.3em]
    &\text{(M2)}\quad
    \begin{gathered}
        Y = \textstyle\sum_{i=1}^5 X_i X_{i+5} + \eps, \quad X \sim \N(0_{50},\Xi),\\
        \eps\sim\N(0,\sigma^2),
    \end{gathered}
\end{align*}
where \(0_{50} \in\R^{50}\) and \(\Xi\in\R^{50\times 50}\), to generate the data. These are clearly non-linear and cover categorical and continuous responses. In (M2), we choose \(\sigma^2\) such that the variance of \(\eps\) is a fifth of the variance of the \(X\)-dependent terms of \(Y\) amounting to a noise-to-signal ratio of \(0.2\). As for the covariance matrix \(\Xi\), two cases are considered: we either set \(\Xi = \Id\) or use decaying correlation, i.e. \(\Xi_{ij} = 0.5^{\lvert i-j\rvert}\).\newline
We simulate datasets with sample sizes $n\in\{400, 800, 1200, 1600\}$ for all different settings of models and covariance matrices and estimate $H$ with block estimators of sizes 5 and 10 as well as with an incomplete U-statistics estimator of size $l=1$. Since the partial target both depends on the entire set of selected variables and its value cannot be directly inferred from the data-generating mechanism, it is inherently hard to rigorously assess the type-I error of any given partial target. (However, the false positive rate for testing different partial targets hints that the type-I error is probably close to 0.05). For this reason, we concentrate on the HSIC-target in our analysis. For both models $\HSIC(Y,X^{(j)}) = 0, j\in\{11,\ldots,50\},$ holds which allows us to estimate the type-I error as the ratio of null hypothesis rejections and tests among the selected features with indices in $\{11,\ldots,50\}$. If influential variables are correlated with uninfluential ones, the HSIC-value is not precisely zero; nonetheless, the use of \emph{decaying} correlation renders the bias of this effect ignorable. Figure \ref{fig:typeI-error} illustrates that the type-I error is close to 0.05 across all estimators and data generating mechanisms, even for small sample sizes.
\begin{figure*}[!ht]
    \vskip 0.1in
    \begin{center}
    \centerline{\includegraphics[width=17cm, height=3.5cm]{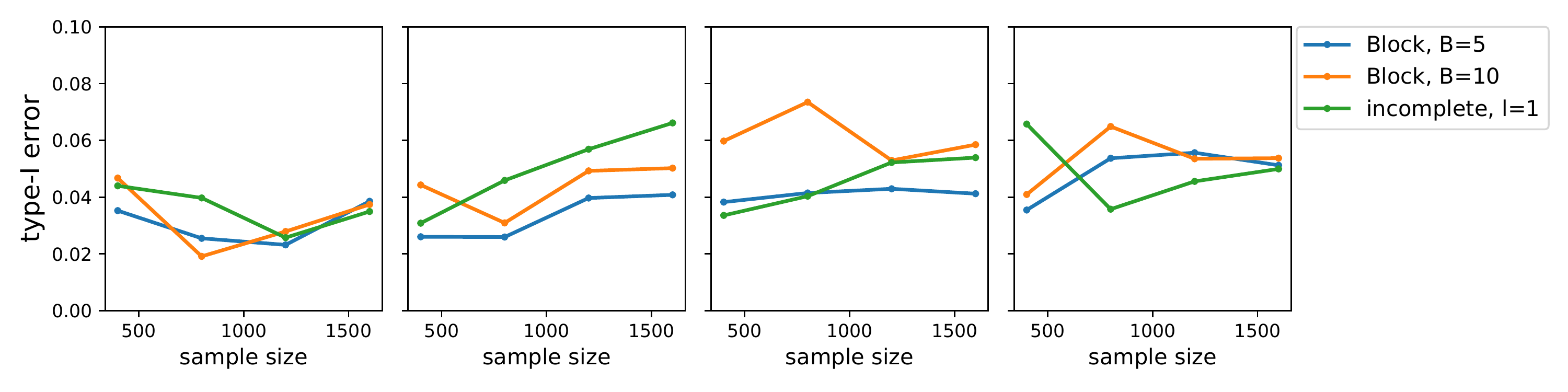}}
    \caption{Type-I error for the HSIC-target in different toy models (from left to right): (M1) with $\Xi = \Id$, (M1) with $\Xi_{ij} = 0.5^{\lvert i-j\rvert}$,\\ (M2) with $\Xi = \Id$, (M2) with $\Xi_{ij} = 0.5^{\lvert i-j\rvert}$}
    \label{fig:typeI-error}
    \end{center}
    \vskip -0.2in
\end{figure*}

\subsubsection{Power}
In this set of experiments, we adapt the toy model (M1), replacing $X_1$ by $\theta X_1$ and setting $\Xi = \Id$, and denote it (M1'). Moreover, we introduce the following linear and modified linear model
\begin{align*}
    &\text{(M3)}\quad
    \begin{gathered}
        Y = \theta X_1 + \textstyle\sum_{i=2}^{10} X_i + \eps, \quad X \sim \N(0_{50},\Id),\\
        \eps\sim\N(0,\sigma^2),
    \end{gathered}\\[0.3em]
    &\text{(M4)}\quad
    \begin{gathered}
        Y = \theta\, h(X_1) + \textstyle\sum_{i=2}^{10} X_i + \eps, \quad X \sim\N(0_{50},\Id),\\
        h(x) = x-x^3,\quad\eps\sim\N(0,\sigma^2),
    \end{gathered}
\end{align*}
where $\theta\in\R$. Our proposed algorithm is applied with both a block estimator, $B=10$, and an incomplete U-statistics estimator, $l=1$, and is compared with the so-called Multi PSI-approach, presented by \citet{aistats}: We select \(k\) features with HSIC-ordering and carry out inference for the HSIC-targets. (Since \(M\) is not involved in the feature selection, we cannot define partial targets for HSIC-ordering.) It was empirically shown that multiscale bootstrapping (\citeyear{shim2004}), which was first presented by \citeauthor{shim2004} and is abbreviated by Multi, is a more powerful PSI-approach for HSIC-ordering than truncated Gaussians. In our simulations, we set $k=15$ and applied Multi with a block estimator, $B=10$, as well as an incomplete U-statistics estimator, $l=1$. Additionally, we applied \citeauthor{lee}'s original PSI-method (\citeyear{lee}), that relies on Lasso-regularisation and assumes a linear regression setting, to (M3) and (M4). The inference target in this case is the partial regression coefficient.\newline
We simulate datasets for values of $\theta$ in $\{0.00, 0.33, 0.67, 1.00, 1.33, 1.67, 2.00, 2.33\}$ and a sample size of $n=800$, and compute the ratio of rejections of the null-hypothesis, i.e. the respective inference target corresponding to $X_1$ is zero, and the number of tests carried out. Plotting the obtained ratios against $\theta$ does not correspond to the usual depiction of the power function as $\theta$ is not the inference target for all considered procedures. However, this allows for an intuitive understanding of how strong $X_1$ needs to influence $Y$ in order to be detected.\newline
Figure \ref{fig:power} exhibits that the power of our proposed algorithm is similar to the Multi procedure, especially when using the block estimator. This confirms that, even without a manual choice of the number of selected features and costly bootstrap sampling, it is possible to match the best performing model-free PSI-methods. Moreover, we observe that regularised linear regression clearly outperforms our procedure as well as Multi for small values of $\theta$ if the data-generating process is indeed linear. However, when $X_1$ influences the outcome $Y$ not only through a linear, but also through a cubic term, that is (M4), we discern that PSI based on a linear model has no power at all whereas model-agnostic methods still achieve noticeable power. This exemplifies that PSI procedures, built upon a certain model, can only be confidently used if there is limited uncertainty about the underlying data generation.
\begin{figure*}[!ht]
    \vskip 0.1in
    \begin{center}
    \centerline{\includegraphics[width=15cm, height=3.7cm]{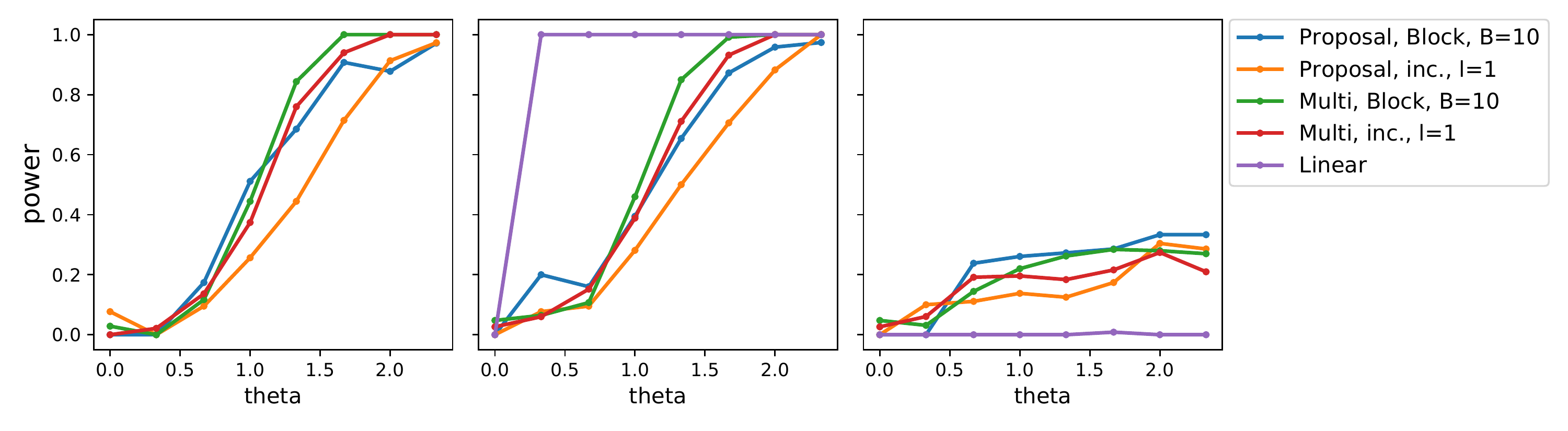}}
    \caption{Power of detecting $X_1$ as influential feature in different toy models (from left to right): (M1'), (M3), (M4)}
    \label{fig:power}
    \end{center}
    \vskip -0.2in
\end{figure*}

% Logistic model with theta
% theta values in ??, sample_size = 1200 ?

% HSIC-ordering (already mentioned??)
% Introduce Multi, (how many features do I select?)
% Linear comparison

% Experiments: Linear, Linear-Modified, Logistic with
% Block = 10, inc = 1, Multi-Block, Multi-inc, Linear

\subsubsection{Additional Experiments}
In our simulations, we focused on the statistical properties of the proposed approach but also started investigating the behaviour of the algorithm for different feature selection set-ups. The conclusions we drew deserve a few comments.\newline
\textit{\textbf{Screening}} HSIC-ordering includes the influential features into the screened set with high probability when $p'$ and the size of the first fold are sufficiently large. Nonetheless, screening is merely a method to reduce computational complexity and can potentially weaken the performance of the downstream HSIC-Lasso procedure. Therefore, the number of screened features $p'$ should be set as high as computational resources allow.\newline
\textit{\textbf{Regularisation}} The use of an adaptive penalty term generally leads to fewer selected features than the vanilla Lasso regularisation. Moreover, cross-validation and the AIC often choose similar values for $\lambda$.\newline
\textit{\textbf{Dependence structure}} In datasets with strong correlation between influential and uninfluential features we observe that the partial target is capable of correctly detecting the uninfluential ones. How the dependence structure among influential features materialises in rejections of the null hypothesis, however, is a subtle and still open question.\newline
\textit{\textbf{Estimators}} In general, block estimators are computationally less expensive than incomplete U-statistics estimators. For the latter ones, the calculation costs, but also the power increases with the size $l$.\newline
\textit{\textbf{Split ratio}} In the experiments with artificial data, the size of the first fold was set to be a third of the second fold as this already suffices to obtain a decent estimate of the regularisation parameter. For most datasets it is advisable to dedicate more data to the HSIC-Lasso procedure than to the hyper-parameter selection; however, a reliable heuristic for the split ratio remains subject to future research.

\subsection{Benchmarks}
Now we proceed to applying our proposed algorithm to benchmark datasets from the UCI Repository and the Broad Institute’s Single Cell Portal, respectively. We provide an additional, more in-depth experiment in the supplementary materials.
\subsubsection{Turkish Student Dataset}
This dataset contains 5\,820 course evaluation scores provided by students from Gazi University, Ankara, who answered 28 questions on a five-level Likert scale, see further \cite{turkish-student}. For our experiment we use the perceived difficulty of the course as response variable.\newline
This data was previously evaluated by \citet{yamada2018} who selected features with HSIC-ordering and used the familiar framework of the polyhedral lemma and truncated Gaussians for PSI, denoted by Poly. We use the block estimator of size 10 and set $k=10$ for the Multi approach to accord with \citeauthor{yamada2018} and report their findings along with ours. For our proposal, the first fold contains 20\% of the data, we select \(\lambda\) with 10-fold cross-validation and do not carry out screening as the number of features (\(p=28\)) is manageable. Table \ref{tab:student} summarises our findings.
\begin{table}[t]
        \caption{P-values of the HSIC-target for selected features in the Turkish student dataset}
        \label{tab:student}
        \vskip 0.1in
        \begin{center}
    \begin{small}
    \begin{sc}
        \begin{tabular}{|c|c|c|c|}
                \hline
                \multicolumn{1}{|c|}{Feature} & \multicolumn{3}{c|}{p-value} \\ \cline{2-4}
                & Proposal & Multi & Poly\\\hline
                Q2 & \textbf{0.021} & - & 0.452\\
                Q3 & - & 0.782 & -\\
                Q11 & \textbf{0.004} & - &- \\
                Q13 & - & - & \textbf{0.018}\\
                Q14 & - & \textbf{0.001} & -\\
                Q15 & - & 0.095 & -\\
                Q17 & $<$\textbf{0.001} & $<$\textbf{0.001} & \textbf{0.033}\\
                Q18 & - & - & 0.186\\
                Q19 & - & $<$\textbf{0.001} & -\\
                Q20 & - & \textbf{0.004} & 0.463\\
                Q21 & - & \textbf{0.032} & \textbf{0.033}\\
                Q22 & - & $<$\textbf{0.001} & \textbf{0.042}\\
                Q23 & - & - & \textbf{0.037}\\
                Q25 & - & \textbf{0.002} & - \\
                Q26 & - & - & 0.176\\
                Q28 & \textbf{0.004} & \textbf{0.041} & $<$\textbf{0.001}\\
                \hline
        \end{tabular}
    \end{sc}
    \end{small}
        \end{center}
        \vskip -0.1in
\end{table}
\newline
First, we notice that Multi and Poly pick different features despite sharing the same selection procedure which can probably be attributed to randomisation, carried out by \citet{yamada2018}. Moreover, we observe that HSIC-Lasso chooses a very parsimonious model with only four covariates whose associated HSIC-targets are highly significant. Among the tested approaches, there is a moderate agreement on the influential covariates where only 'Q17: The Instructor arrived on time for classes.' and 'Q28: The Instructor treated all students in a right and objective manner.' are unanimously chosen and found to be significant. Considering the partial targets for HSIC-Lasso, we find that only \(\bpar_{17,S}\) appears significant.\newline
The different results that we obtain may hint that the Turkish student data set is intrinsically noisy or that methods based on HSIC-estimation and the polyhedral lemma are unstable. However, the Lasso-selection of HSIC-Lasso, unlike Multi or Poly, penalises correlated features which affects PSI as well. Hence, different results for the compared methods do not necessarily indicate incorrectness of either approach.

\subsubsection{Single-Cell RNAseq Data}
\citet{villani} isolated around 2\,400 blood cells, enriched in two particular kinds of leukocytes: dendritic cells (DCs) and monocytes. Then, they measured the gene expression on every cell using single-cell RNAseq aiming to describe the diversity between, and within those two cell types based on their gene expression profile. They end up defining 10 different subclasses: 6 types of DCs and 4 types of monocytes. In our experiment, we use 1\,078 samples of this data aspiring to find the genes that separate these 10 classes among the 26\,593 genes. We standardise the single-cell RNAseq data gene-wise and impute missing gene expressions with MAGIC, see further \cite{magic}. Since the response is categorical, we use the normalised delta kernel. Unlike the Turkish student dataset, we are now confronted with a considerably high-dimensional problem where the number of features greatly exceeds the sample size. Therefore, screening becomes more challenging and we consequently split the data evenly into first and second fold. We screen 1\,000 potentially influential features and apply the incomplete U-statistic estimator with a large size of 20, hoping to better capture the potentially involved dependence structure. The remaining parameters were set as in the previous experiment. For an in-depth analysis of the dataset, we recommend to conduct a sensitivity analysis which investigates the behaviour of the method for different values of the parameters, such as the split ratio, the number of screened features or the size of the estimator.\newline
We find that HSIC-Lasso selects 13 features and that all of the associated HSIC-targets and most of the partial targets are significant, cf. Table~\ref{tab:villani}.
\begin{table}[t]
        \caption{P-Values of the partial targets corresponding to selected features in the single-cell RNAseq dataset}
        \label{tab:villani}
        \vskip 0.1in
        \begin{center}
    \begin{small}
    \begin{sc}
    \begin{tabular}{|c|c||c|c|}
                \hline
                Gene & p-value & Gene & p-value\\\hline
                \emph{ACTB} & 0.961 & \emph{IGJ} & $<$\textbf{0.001} \\
                \emph{CD14} & \textbf{0.026} & \emph{LYZ} & $<$\textbf{0.001} \\
                \emph{FCER1A} & $<$\textbf{0.001} & \emph{MTRNR2L2} & 0.420 \\
                \emph{FCGR3A} & \textbf{0.001} & \emph{RPS3A} & $<$\textbf{0.001} \\
                \emph{FTL} & 0.968 & \emph{TMSB4X} & \textbf{0.012} \\
                \emph{HLA-DPA1} & $<$\textbf{0.001} & \emph{TVAS5} & 0.553 \\
                \emph{IFI30} & \textbf{0.002} &  & \\\hline
        \end{tabular}
        \end{sc}
    \end{small}
        \end{center}
        \vskip -0.1in
\end{table}
One of the traditionally defining characteristics of monocytes is the expression of the CD14 protein; encouragingly, HSIC-Lasso selected this gene as a discriminating feature. In fact, it also selected six other genes which \citeauthor{villani} used in multiple cell signatures: \emph{FCGR3A} (DC4), \emph{FCER1A} (DC2 and DC3), \emph{FTL} (DC4), \emph{IFI30} (cDC-like), \emph{IGJ} (pDC-like), and \emph{LYZ} (cDC-like). Jointly, this shows the ability of HSIC-Lasso to recover multiple genes used to define the classes.\newline
More exciting, however, are the genes which are selected by HSIC-Lasso and were not used in the original study. These might point to new molecular signatures and functions that differentiate these cell types. For instance, one of these genes play a role in immunity: \emph{HLA-DPA1}, which presenting cells like DCs use to present exogenous proteins to other immune cells. The proposed PSI framework adds nuance to this picture by providing a soft ranking of the selected genes. Notoriously, \emph{FTL} and \emph{ACTB} have p-values close to 1, which suggests that most of the information they provide is already captured by other selected genes. Hence, other genes should be prioritised in hypothetical downstream experiments.\newline
Our method contributes to the detection of discriminatory features inasmuch it facilitates to formulate confidence statements about the obtained results.

\section{Discussion and Outlook}
Post-selection inference was initially proposed for linear regression models with Lasso regularisation and subsequently expanded to generalised regression situations and Lasso variants, see for example \cite{l1_pen} and \cite{gen1}. However, knowledge of an underlying model seemed to be necessary in order to properly account for the selection-process. \citet{yamada2018} overcame this limitation by capturing the unknown dependence between variables via the model-agnostic Hilbert-Schmidt independence criterion and embedding the estimates into the formerly proposed PSI-framework using asymptotic normality. However, this approach still requires the user to decide how many features to select. Our method chooses features in a data-driven manner and correctly accounts for the selection process and is thus ideal for situations with limited knowledge about the structure of the data.\newline
Extending the theoretical framework to allow for a sequential application of HSIC-Lasso with different values of $\lambda$, similar to \citeauthor{taylor14}'s least angle regression algorithm (\citeyear{taylor14}), is an interesting and practically relevant step for future research. Moreover, \citet{crude_targets} hint that developing inference targets apart from the partial and HSIC-target can be useful to reduce the length of confidence intervals. Lastly, the incorporation of the choice of $\lambda$ into the post-selection framework for HSIC-Lasso would allow to analyse the data on only one fold and render the proposed framework fully in line with the PSI philosophy. \citet{loftus} and \citet{psi_cv} took first steps in this direction; albeit, the application to HSIC-Lasso is still an open issue.\newline
From an algorithmic point of view, establishing heuristics for a good choice of hyper-parameters, such as the size of the estimators or the split ratio, can as well be object of future research.

\section*{Acknowledgments}
Tobias Freidling thanks the Max Weber Program and Erasmus+ for supporting his stay at Kyoto University with a scholarship and Mathias Drton for an insightful discussion on Theorem \ref{th:asymp_pivot}. Makoto Yamada was supported by MEXT KAKENHI 20H04243 and partly supported by MEXT KAKENHI 21H04874. Benjamin Poignard was supported by the Japanese Society for the Promotion of Science (Start-Support Kakenhi 19K23193).

\bibliography{references}
\bibliographystyle{icml2021}
\newpage
\onecolumn
\appendix

\section{Technical Appendix on HSIC and HSIC-Lasso}

\subsection{Measuring Dependence with HSIC}
%\textcolor{red}{Part E looks good; I would put all the proofs before Part E and replace "missing proofs" with "Proofs"; I would put for title A "Technical appendix on HSIC" and replace A1's title with "Measuring dependence with HSIC" (this is just a proposition, but it would be better to use a title highlighting that you make a thorough review of HSIC); I would replace A.2's title with "HSIC estimation"}

%\vspace*{0.4cm}

        The main incentive to develop advanced techniques to describe dependence relations between two random variables \(X\) and \(Y\) arises from the fact that the covariance
        \begin{equation*}
        \cov{X}{Y} = \E{XY}-\E{X}\E{Y},
        \end{equation*}
        is designed for linear relationships only. If the dependence structure, however, is of non-linear nature, the covariance can only partly capture the relationship between \(X\) and \(Y\) or completely fails to do so. Nevertheless, general, or rather model-free, independence can be expressed in terms of the covariance as follows, cf. \cite{coco}.
        \begin{proposition}\label{prop:gen_indep}
                The random variables \(X\) and \(Y\) are independent if and only if  \(\cov{f(X)}{g(Y)} = 0\) for each pair \((f,g)\) of bounded, continuous functions.
        \end{proposition}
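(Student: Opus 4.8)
The plan is to prove both implications, with the reverse direction carrying essentially all of the content. The forward direction is immediate: if \(X \independent Y\), then for any bounded measurable \(f,g\) the random variables \(f(X)\) and \(g(Y)\) are independent, whence \(\E{f(X)g(Y)} = \E{f(X)}\E{g(Y)}\) --- the expectations all being finite since \(f,g\) are bounded --- and so \(\cov{f(X)}{g(Y)} = 0\). Note that continuity is not even needed for this direction.

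For the reverse direction, I first restate the hypothesis in the more convenient factorised form: \(\cov{f(X)}{g(Y)} = 0\) for all bounded continuous \(f,g\) is equivalent to
\begin{equation*}
\E{f(X)g(Y)} = \E{f(X)}\,\E{g(Y)} \quad \text{for all bounded continuous } f,g.
\end{equation*}
The goal is to upgrade this factorisation over the \emph{product class} of bounded continuous functions into the statement that the joint law \(\Prob_{X,Y}\) equals the product measure \(\Prob_X \otimes \Prob_Y\). The difficulty is that bounded continuous functions do not include indicators, so I cannot test rectangles directly; instead I route the argument through characteristic functions, whose trigonometric building blocks \emph{are} bounded and continuous.

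Concretely, I would fix \(t,s \in \R\) and apply the factorisation to each of the four products obtained by pairing \(f \in \{\cos(t\,\cdot), \sin(t\,\cdot)\}\) with \(g \in \{\cos(s\,\cdot), \sin(s\,\cdot)\}\), all of which are bounded and continuous. Expanding \(e^{i(tX+sY)} = (\cos tX + i\sin tX)(\cos sY + i\sin sY)\) and taking expectations term by term, the four factorisation identities assemble into
\begin{equation*}
\E{e^{i(tX+sY)}} = \E{e^{itX}}\,\E{e^{isY}},
\end{equation*}
so that the joint characteristic function factorises for every \((t,s)\). By the uniqueness theorem for characteristic functions this forces \(\Prob_{X,Y} = \Prob_X \otimes \Prob_Y\), which is precisely independence of \(X\) and \(Y\).

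The main obstacle is exactly this passage from the test-function identity to equality of measures; the characteristic-function route makes it painless by exploiting that the hypothesis only supplies bounded continuous functions. As an alternative one could instead approximate each indicator \(\ind_{(-\infty,a]}\) from above by a decreasing sequence of bounded continuous functions, pass to the limit by dominated convergence to obtain the factorisation of the joint cdf \(\Prob(X \le a, Y \le b) = \Prob(X \le a)\,\Prob(Y \le b)\), and then extend from the \(\pi\)-system of half-infinite rectangles to all Borel sets via a Dynkin (\(\pi\)-\(\lambda\)) argument. I would present the characteristic-function version as the cleanest, noting that in either approach the only genuine subtlety is bridging the restricted class of test functions to a statement about the underlying measures.
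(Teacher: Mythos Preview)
Your proof is correct. The paper, however, does not prove this proposition at all: it is stated with a citation (``cf.\ \cite{coco}'') and used as background motivation for the HSIC construction, so there is no proof in the paper to compare against. Your characteristic-function argument is the standard route and is entirely sound for real-valued (or \(\R^d\)-valued) random variables; the alternative via monotone approximation of indicators and a \(\pi\)--\(\lambda\) argument that you sketch is also fine and has the mild advantage of generalising more readily to random variables taking values in an arbitrary metric space, which is closer to the generality in which the paper later deploys the idea (kernels on abstract domains \(D_X\), \(D_Y\)). If you want to match that level of generality you could phrase the reverse direction as: the hypothesis says \(\int f\otimes g\,\mathrm{d}\Prob_{X,Y} = \int f\otimes g\,\mathrm{d}(\Prob_X\otimes\Prob_Y)\) for all bounded continuous \(f,g\); the algebra generated by such tensor products is point-separating on \(D_X\times D_Y\) and contains the constants, so by a Stone--Weierstrass / portmanteau argument the two Borel probability measures agree. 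But for the proposition as literally stated your argument is complete.
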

        There are two lines of thought leading to the Hilbert-Schmidt independence criterion: one presented by \citet{gretton} regarding HSIC as the Hilbert-Schmidt norm of a cross-covariance operator and one thinking of HSIC as maximum mean discrepancy on a product space according to \citet{zhang}. In this work, we follow the latter derivation and link it with the first approach and Proposition \ref{prop:gen_indep} at the end.\newline
        First, we introduce the concept of reproducing kernel Hilbert spaces.
        \begin{definition}
                Let \(\Hc\) be a Hilbert space of real-valued functions defined on \(D\) with scalar product \(\langle\cdot,\cdot\rangle_\Hc\). A function \(k\colon D\times D \rightarrow \R \) is called a \emph{reproducing kernel} of \(\Hc\) if
                \begin{flalign*}
                &\text{1. } k(\cdot,x) \in \Hc\quad \forall x\in D,&&\\
                &\text{2. } \langle f, k(\cdot, x) \rangle_\Hc = f(x) \quad\forall x\in D\,\,\forall f \in \Hc.&&
                \end{flalign*}
                If \(\Hc\) has a reproducing kernel, it is called a \emph{reproducing kernel Hilbert space (RKHS)}.
        \end{definition}
        \begin{remark}
                As an immediate consequence of the upper definition, we get
                \begin{equation*}
                k(x,y) = \langle k(\cdot, x), k(\cdot, y)\rangle_\Hc\quad \forall x,y\in D.
                \end{equation*}
        \end{remark}
        The following theorem, proved by \citet{aronszajn}, provides sufficient conditions for a function \(k\) to be a reproducing kernel.
        \begin{theorem}[Moore-Aronszajn]
                Let \(k\colon D\times D \rightarrow \R\), be symmetric and positive definite, that is
                \begin{equation*}
                \sum_{i = 1}^n \sum_{j = 1}^n a_i a_j k(x_i, x_j) \geq 0, \quad \forall n\geq 1\,\,\forall a \in \R^n\,\,\forall x \in D^n.
                \end{equation*}
                Then there is a unique RKHS \(\Hc_k\) with reproducing kernel \(k\).
        \end{theorem}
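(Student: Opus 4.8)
The plan is to construct \(\Hc_k\) explicitly as the completion of the span of the kernel sections, verify that the reproducing property survives the completion, and then obtain uniqueness by a density argument. First I would form the pre-Hilbert space
\[
\Hc_0 := \mathrm{span}\{k(\cdot, x) : x \in D\} = \Big\{ \textstyle\sum_{i=1}^n a_i\, k(\cdot, x_i) : n\in\Nat,\, a_i\in\R,\, x_i\in D\Big\},
\]
and endow it with the bilinear form
\[
\Big\langle \textstyle\sum_i a_i k(\cdot, x_i),\; \sum_j b_j k(\cdot, y_j)\Big\rangle := \sum_{i,j} a_i b_j\, k(x_i, y_j).
\]
The first thing to check is that this is \emph{well defined}, i.e.\ independent of the chosen representations; this follows at once from the identity \(\langle f, g\rangle = \sum_j b_j f(y_j) = \sum_i a_i g(x_i)\), which expresses the form purely through the function values of \(f\) and \(g\). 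In particular the reproducing identity \(\langle f, k(\cdot, x)\rangle = f(x)\) holds on \(\Hc_0\) by construction.

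Next I would verify that \(\langle\cdot,\cdot\rangle\) is a genuine inner product. Symmetry and bilinearity are immediate, and \(\langle f, f\rangle = \sum_{i,j} a_i a_j k(x_i, x_j) \ge 0\) is exactly the positive-definiteness hypothesis on \(k\). Definiteness is the one point that needs the reproducing property: combining it with Cauchy--Schwarz for the semi-definite form gives \(|f(x)|^2 = |\langle f, k(\cdot, x)\rangle|^2 \le \langle f, f\rangle\, k(x,x)\), so \(\langle f, f\rangle = 0\) forces \(f\equiv 0\). The same inequality shows that every evaluation functional \(f\mapsto f(x)\) is bounded on \(\Hc_0\), with norm at most \(\sqrt{k(x,x)}\).

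I would then take \(\Hc_k\) to be the completion of \((\Hc_0, \langle\cdot,\cdot\rangle)\). The delicate step --- and the one I expect to be the main obstacle --- is to realise this abstract completion as an honest space of \emph{functions} on \(D\) rather than as equivalence classes of Cauchy sequences. Here the boundedness of evaluation is decisive: any Cauchy sequence \((f_m)\) in \(\Hc_0\) is, for each fixed \(x\), a Cauchy sequence of reals \(f_m(x)\), so the pointwise limit \(f(x) := \lim_m f_m(x)\) exists and depends only on the limit in \(\Hc_k\); this yields a well-defined injection of \(\Hc_k\) into \(\R^D\). The reproducing property extends by continuity, since \(f(x) = \lim_m f_m(x) = \lim_m \langle f_m, k(\cdot, x)\rangle = \langle f, k(\cdot, x)\rangle\), and thus \(k\) is a reproducing kernel for \(\Hc_k\).

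For uniqueness, suppose \(\Hc'\) is any RKHS on \(D\) with reproducing kernel \(k\). Then \(\Hc'\) contains every \(k(\cdot, x)\), and the reproducing property fixes \(\langle k(\cdot, x), k(\cdot, y)\rangle_{\Hc'} = k(x, y)\), so \(\Hc'\supseteq \Hc_0\) with the same inner product. Moreover \(\Hc_0\) is dense in \(\Hc'\): if \(f\in\Hc'\) is orthogonal to every \(k(\cdot, x)\), then \(f(x) = \langle f, k(\cdot, x)\rangle = 0\) for all \(x\), whence \(f = 0\). Since \(\Hc'\) is complete and contains \(\Hc_0\) densely with the prescribed inner product, it coincides --- as a space of functions together with its inner product --- with the completion \(\Hc_k\), which gives uniqueness.
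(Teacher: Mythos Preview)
Your argument is correct and is precisely the classical construction due to Aronszajn. Note, however, that the paper does not actually supply its own proof of this theorem: it merely states the result and attributes the proof to \citet{aronszajn}, so there is no in-paper proof to compare against. Your proposal is exactly the standard route found in Aronszajn's original paper and in the textbook literature, and no step is missing or flawed.
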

        Against this backdrop, we may ask how properties of the kernel \(k\) translate into characteristics of \(\Hc_k\). The notion of a universal kernel, introduced by \citet{steinwart}, helps to shed light on this issue.
        \begin{definition}\label{def:univ_kernel}
                A continuous kernel \(k\) on a compact metric space \((D, d)\) is called \emph{universal} if \(\Hc_k\) is dense in \(C(D)\), the space of continuous functions on \(D\), with respect to \(\lVert \cdot\rVert_\infty\).
        \end{definition}
        It is shown that both the Gaussian and exponential kernel, defined by
        \begin{equation*}
        k(x, y) = \exp\left(-\frac{\lVert x-y \rVert_2^2}{2\sigma^2}\right),\,\sigma^2 > 0,\qquad
        k(x, y) = \exp\left(-\frac{\lVert x-y \rVert_2}{2\sigma}\right),\, \sigma > 0,
        \end{equation*}
        respectively, are universal.\newline
        Second, we introduce the particularly useful framework of embedding distributions into Hilbert spaces according to \citet{HS_embed}.
        \begin{definition}\label{prob_embed}
                Let \(k\) be a bounded kernel on \(D\) and \(\Prob\) a probability measure on \(D\). The \emph{kernel embedding} of \(\Prob\) into the RKHS \(\Hc_k\) is \(\mu_k(\Prob) \in \Hc_k\) such that
                \begin{equation*}
                \E{f(X)} = \int_D f(x)\,\text{d}\Prob(x) =  \langle f, \mu_k(\Prob)\rangle_{\Hc_k}, \quad X\sim \Prob,\,\forall f \in \Hc_k.
                \end{equation*}
        \end{definition}
        \begin{remark}\label{alt_embed}
                Alternatively, \(\mu_k(\Prob)\) can be defined by
                \begin{equation*}
                \mu_k(\Prob) = \int_D k(\cdot, x)\,\text{d}\Prob(x).
                \end{equation*}
        \end{remark}
        Definition \ref{prob_embed} allows us to use Hilbert space theory on distributions which gives rise to the definition of maximum mean discrepancy (MMD), see for example \cite{mmd_borgwardt} and \cite{mmd}, which measures the distance between probability measures.
        \begin{definition}\label{def:MMD}
                Let \(k\) be a bounded kernel and \(\Prob\) and \(\Qrob\) probability measures on \(D\). The \emph{maximum mean discrepancy (MMD)} between \(\Prob\) and \(\Qrob\) with respect to \(k\) is defined as
                \begin{equation*}
                \MMD_k(\Prob, \Qrob) = \lVert \mu_k(\Prob) - \mu_k(\Qrob)\rVert^2_{\Hc_k}.
                \end{equation*}
        \end{definition}

        \begin{lemma}\label{lem:univ_kernel}
                In the setting of Definition \ref{def:MMD}, \(\MMD_k\) is a metric on probability measures if \(k\) is a universal kernel.
        \end{lemma}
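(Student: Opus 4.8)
The plan is to check the metric axioms and to observe that all of them except one are formal consequences of $\MMD_k$ being built from the Hilbert-space norm of $\Hc_k$. Indeed, $\MMD_k(\Prob,\Qrob)\geq 0$ and $\MMD_k(\Prob,\Qrob)=\MMD_k(\Qrob,\Prob)$ are immediate, and the triangle inequality holds for $\sqrt{\MMD_k}$ by the triangle inequality for $\lVert\cdot\rVert_{\Hc_k}$ applied to $\mu_k(\Prob)-\mu_k(\mathbb{S})$ and $\mu_k(\mathbb{S})-\mu_k(\Qrob)$, where $\mathbb{S}$ is any third probability measure. Thus the genuine content of the lemma is the identity of indiscernibles: $\MMD_k(\Prob,\Qrob)=0$ must force $\Prob=\Qrob$. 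Since $\MMD_k(\Prob,\Qrob)=0$ is equivalent to $\mu_k(\Prob)=\mu_k(\Qrob)$, this reduces to showing that universality of $k$ makes the embedding $\Prob\mapsto\mu_k(\Prob)$ injective.

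To prove injectivity, I would suppose $\mu_k(\Prob)=\mu_k(\Qrob)$. By the defining property of the kernel embedding (Definition \ref{prob_embed}), for every $f\in\Hc_k$ we have $\int_D f\,\mathrm{d}\Prob=\langle f,\mu_k(\Prob)\rangle_{\Hc_k}=\langle f,\mu_k(\Qrob)\rangle_{\Hc_k}=\int_D f\,\mathrm{d}\Qrob$, so $\Prob$ and $\Qrob$ assign the same integral to every element of $\Hc_k$. The key step is then to upgrade this equality from the subspace $\Hc_k$ to all of $C(D)$. Fix $g\in C(D)$ and $\eps>0$; by universality (Definition \ref{def:univ_kernel}) there is $f\in\Hc_k$ with $\lVert g-f\rVert_\infty<\eps$. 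Because $\Prob$ and $\Qrob$ are probability measures, $\lvert\int_D(g-f)\,\mathrm{d}\Prob\rvert\leq\lVert g-f\rVert_\infty<\eps$ and likewise for $\Qrob$; combining these with $\int_D f\,\mathrm{d}\Prob=\int_D f\,\mathrm{d}\Qrob$ through the triangle inequality yields $\lvert\int_D g\,\mathrm{d}\Prob-\int_D g\,\mathrm{d}\Qrob\rvert<2\eps$. As $\eps$ was arbitrary, $\int_D g\,\mathrm{d}\Prob=\int_D g\,\mathrm{d}\Qrob$ for every $g\in C(D)$.

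Finally, since $D$ is a compact metric space, every finite Borel measure on it is regular, and the Riesz--Markov--Kakutani representation theorem guarantees that such a measure is uniquely determined by the functional $g\mapsto\int_D g\,\mathrm{d}\Prob$ on $C(D)$. Hence $\Prob=\Qrob$, which establishes injectivity and therefore the identity of indiscernibles. I expect the main obstacle to be precisely the extension step from $\Hc_k$ to $C(D)$: one must ensure that the $\sup$-norm approximation supplied by universality genuinely controls the difference of integrals, which is exactly where the boundedness of $k$ (needed for $\mu_k(\Prob)$ to exist, cf.\ Definition \ref{prob_embed}) and the finiteness of the total mass of probability measures enter. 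Once equality of integrals is pushed to all continuous functions, the conclusion $\Prob=\Qrob$ is a standard measure-theoretic fact.
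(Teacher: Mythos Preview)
Your argument is correct and follows the same route as the paper: both reduce the lemma to injectivity of the embedding $\Prob\mapsto\mu_k(\Prob)$ and derive the remaining axioms from the norm properties of $\lVert\cdot\rVert_{\Hc_k}$. The only difference is that the paper simply cites Theorem~1 of \cite{HS_embed} for injectivity, whereas you supply a self-contained proof via the $\lVert\cdot\rVert_\infty$-density of $\Hc_k$ in $C(D)$ and the Riesz--Markov representation theorem---which is essentially the argument behind the cited result. Your observation that the triangle inequality holds for $\sqrt{\MMD_k}$ rather than $\MMD_k$ itself (the latter being the \emph{squared} norm in Definition~\ref{def:MMD}) is in fact more careful than the paper's phrasing.
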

        \begin{proof}
                Theorem 1 of \cite{HS_embed} states that \(\Prob \mapsto \mu_k(\Prob)\) is injective for universal \(k\). Hence, any two different measures have two distinct embeddings. The statement directly follows from the norm properties of \(\lVert\cdot\rVert_{\Hc_k}\).
        \end{proof}
        The maximum mean discrepancy can be used to test whether two given data samples stem from the same distribution. Since our goal is to find a measure for the dependence between two random variables \(X\) and \(Y\), we use MMD to compare the joint distribution \(\Prob_{X,Y}\) and the product of the marginals \(\Prob_X \Prob_Y\).\newline
        To this end, consider any two kernels \(k\) and \(l\) on the domains \(D_X\) and \(D_Y\). It is easy to verify that \(K = k \otimes l\), given by
        \begin{equation*}
        K\big((x, y), (x', y')\big) = k(x, x')\,l(y, y'),\quad
        x,x'\in D_X,\,\, y,y' \in D_Y,
        \end{equation*}
        is a valid kernel on the product space \(D_X\times D_Y\). Employing Remark \ref{alt_embed}, we can define a dependence measure between \(X\) and \(Y\) based on RKHSs.
        \begin{definition}\label{def:hsic}
                Let \(X\) and \(Y\) be random variables and \(k\) and \(l\) be bounded kernels on the domains \(D_X\) and \(D_Y\), respectively. The \emph{Hilbert-Schmidt independence criterion} \(\HSIC_{k,l}(X,Y)\) for \(X\) and \(Y\) based on the kernels \(k\) and \(l\) is given by
                \begin{align}
                \HSIC_{k,l}(X,Y) &= \MMD_{k\otimes l}(\Prob_{X,Y}, \Prob_X \Prob_Y)\nonumber\\
                &=
                \big\Vert\Esub{XY}{k(\cdot,X)\otimes l(\cdot,Y)} - \Esub{X}{k(\cdot,X)}\,\Esub{Y}{l(\cdot,Y)} \big\Vert_{\Hc_{k\otimes l}}^2.\label{HSIC}
                \end{align}
        \end{definition}\label{def:HSIC}
        The name of HSIC stems from the point of view, held by \citet{gretton}. The term within the norm in (\ref{HSIC}) can be identified with the cross-covariance operator \(C_{XY}\colon\Hc_k\to\Hc_l\) for which
        \begin{equation}\label{eq:hsic_cov}
        \langle f, C_{XY} g \rangle_{\Hc_k} = \cov{f(X)}{g(Y)}\quad \forall f\in \Hc_k\,\forall g\in\Hc_l
        \end{equation}
        holds. Consequently, HSIC is the squared Hilbert-Schmidt norm \(\lVert C_{XY}\rVert^2_{\textnormal{HS}}\).\newline
        Coming full circle, we see that using universal kernels \(k\) and \(l\), which causes \(k\otimes l\) to be universal as well, has two important implications. First, Lemma \ref{lem:univ_kernel} states that HSIC is indeed a valid metric to measure dependence between random variables. Second, Definition \ref{def:univ_kernel} yields that \(\Hc_k\) and \(\Hc_l\) are dense in \(C(D_X)\) and \(C(D_Y)\), respectively. Hence, (\ref{eq:hsic_cov}) directly reflects the characterisation of independence given in Proposition \ref{prop:gen_indep}.\newline
        Moreover, it can be shown that Definition 4 and Definition \ref{def:hsic} are equivalent; yet, the former is more convenient to develop estimators.
        %It can be shown that HSIC can be expressed in terms of kernels which is a more convenient perspective to develop estimators.
        %\begin{lemma}
        %       Assuming the setting of Definition \ref{def:HSIC}, let \(X'\) and \(Y'\) be independent copies of \(X\) and \(Y\). The Hilbert-Schmidt independence criterion has the representation
        %       \begin{align*}
        %       \HSIC_{k,l}(X,Y) =&\, \Esub{X,X',Y,Y'}{k(X, X')\;l(Y,Y')}
        %       + \Esub{X,X'}{k(X,X')} \Esub{Y,Y'}{l(Y,Y')}\\
        %       &- 2\, \Esub{X,Y}{\Esub{X'}{k(X,X')} \Esub{Y'}{l(Y,Y')}}.
        %       \end{align*}
        %\end{lemma}

\subsection{HSIC-Estimation}
        Since the introduction of the Hilbert-Schmidt independence criterion several estimators have been proposed. We assume that a data sample \(\{(x_j, y_j)\}_{j=1}^n\) is given and that the kernels \(k\) and \(l\) are universal and w.l.o.g. bounded by 1.
        \citet{gretton} proposed a simple estimator, which, however, exhibits a bias of order \(\OL(n^{-1})\), whereas \citet{song} corrected this unfavourable trait putting forward an unbiased estimator.
        \begin{definition}\label{def:hsic_estim}
                Let \(K\) and \(L\) be defined by \(K_{ij} = k(x_i,x_j)\) and \(L_{ij} = l(x_i, x_j)\) for \(1\leq i,j \leq n\) and set \(\Kt = K - \text{diag}(K)\), \(\Lt = L - \text{diag}(L)\) and
                \(\Gamma = \Id - \frac{1}{n}\one\one^T\), where \(\one\in \R^n\) has one at every entry. The \emph{biased} and \emph{unbiased HSIC-estimators} \(\HB(X,Y)\) and \(\Hu(X,Y)\) are defined as
                \begin{align*}
                \HB(X,Y) &= \frac{1}{(n-1)^2} \tr(K\Gamma L\Gamma),\\
                \Hu(X,Y) &= \frac{1}{n(n-3)}
                \bigg(\tr(\Kt\Lt) +
                \frac{\one^T\Kt\one\,\one^T\Lt\one}{(n-1)(n-2)}-
                \frac{2}{n-2}\one^T \Kt\Lt\one\bigg).
                \end{align*}
        \end{definition}
        %For both estimators the respective authors state concentration inequalities.
        %\begin{theorem}
        %       For \(n > 1\) and all \(\delta\) with probability of at least \(1-\delta\)
        %       \begin{align*}
        %       \big\lvert \HB(X,Y) - \HSIC(X,Y) \big\rvert
        %       &\leq \sqrt{\frac{\log(6/\delta)}{\alpha^2 n}} + \frac{C}{n},\\
        %       \big\lvert \Hu(X,Y) - \HSIC(X,Y) \big\rvert
        %       &\leq 8 \sqrt{\frac{\log(2/\delta)}{n}},
        %       \end{align*}
        %       where \(\alpha^2 > 0.24\) and \(C\) are constants.
        %\end{theorem}
        In order to develop and establish properties of estimators, it proves advantageous to use the theory of U-statistics (\citeyear{hoeffding}). This broad class of estimators was pioneered by \citeauthor{hoeffding} and provides a versatile framework to establish useful properties for a multitude of estimators. We use the definition of \cite{lee_u_stats}.
        \begin{definition}\label{def:u-stats}
                Let \(X_1,\dotsc,X_n\) be i.i.d. random variables, which take values in a measurable space \((A, \mathcal{A})\) and share the same distribution, and let \(h\colon A^k\rightarrow \R\) be a symmetric function. We denote \(\Sk_{n,k}\) as the set of all k-subsets of \(\{1,\dotsc,n\}\). For \(n \geq k\),
                \begin{equation*}
                        U_n = \binom{n}{k}^{-1} \sum_{\mathmakebox[30pt][c]{(i_1,\dotsc,i_k)\in\Sk_{n,k}}} h(X_{i_1},\dotsc,X_{i_k})
                \end{equation*}
                is a \emph{U-statistic of degree \(k\) with kernel \(h\)}.
        \end{definition}
        \citet{song} proved that \(\Hu\) indeed has an according representation.
        \begin{theorem}
                Using the notation of Definition \ref{def:hsic_estim}, \(\Hu\) is a U-statistic of degree 4 with kernel
                \begin{equation*}
                h(i,j,q,r)= \frac{1}{24} \sum_{(s,t,u,v)}^{(i,j,q,r)} K_{st}(L_{st}+L_{uv}-2L_{su}).
                \end{equation*}
                The sum is taken over all 24 quadruples \((s,t,u,v)\) that can be selected without replacement from \((i,j,q,r)\) and the notation of \(h\) was reduced to only contain the indices.
        \end{theorem}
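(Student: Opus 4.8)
The plan is to prove the identity $\Hu = \binom{n}{4}^{-1}\sum_{(i,j,q,r)\in\Sk_{n,4}} h(i,j,q,r)$ by reducing both sides to a common basis of ``elementary'' index sums and then matching coefficients. Concretely, I would introduce the three sums over tuples of \emph{distinct} indices
\begin{equation*}
S_2 = \sum_{a\neq b} K_{ab}L_{ab},\qquad S_3 = \sum_{\substack{a,b,c\\\text{distinct}}} K_{ab}L_{ac},\qquad S_4 = \sum_{\substack{a,b,c,d\\\text{distinct}}} K_{ab}L_{cd},
\end{equation*}
and show that each side equals the same linear combination of $S_2$, $S_3$ and $S_4$.

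For the right-hand side, I would split the kernel into its three term types, $K_{st}L_{st}$, $K_{st}L_{uv}$ and $-2K_{st}L_{su}$, noting that in every summand $(s,t,u,v)$ is a permutation of the distinct indices $(i,j,q,r)$, so the indices appearing are automatically distinct. The task is then purely combinatorial: for a fixed elementary tuple, count the number of pairs (4-subset, permutation) that produce it. A short count gives multiplicity $2\binom{n-2}{2} = (n-2)(n-3)$ for the $S_2$-type term (two free indices, two interchangeable remaining slots), multiplicity $1$ for the $S_4$-type term (both the 4-subset and the permutation are forced), and multiplicity $n-3$ for the $S_3$-type term (three indices fixed, one free choice for the fourth). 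Since $\binom{n}{4}^{-1}\tfrac{1}{24} = \big(n(n-1)(n-2)(n-3)\big)^{-1}$, this yields an explicit expression for the U-statistic in terms of $S_2,S_3,S_4$.

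For $\Hu$, I would expand $\tr(\Kt\Lt)$ and the two quadratic forms using the fact that $\Kt$ and $\Lt$ have vanishing diagonals, so that all index sums run over distinct indices within each matrix factor. The first term is immediately $\tr(\Kt\Lt) = S_2$. The remaining two require classifying summands by how many indices coincide: writing $\one^T\Kt\Lt\one = \sum_{a\neq b,\,b\neq c} K_{ab}L_{bc}$ and separating the cases $a=c$ and $a\neq c$ gives $\one^T\Kt\Lt\one = S_2 + S_3$, while expanding $\one^T\Kt\one\,\one^T\Lt\one = \sum_{a\neq b,\,c\neq d} K_{ab}L_{cd}$ and grouping by whether $\{a,b\}$ and $\{c,d\}$ overlap in zero, one, or two indices gives $\one^T\Kt\one\,\one^T\Lt\one = S_4 + 4S_3 + 2S_2$. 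Here I would use the symmetry of $K$ and $L$ together with relabelling of the summation indices to identify the four differently-shaped ``one overlap'' sums with the single quantity $S_3$.

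The final step is to substitute these decompositions into the definition of $\Hu$ and collect the coefficients of $S_2$, $S_3$ and $S_4$, checking that they agree with those obtained for the U-statistic. This is a routine algebraic simplification; the only mildly delicate identity is that the coefficient of $S_2$ reduces via $(n-1)(n-4)+2 = (n-2)(n-3)$, after which all three coefficients match and the two expressions coincide. I expect the main obstacle to be the combinatorial bookkeeping in the two places where overlaps arise --- the multiplicity count for the U-statistic and, above all, the overlap classification of the product term $\one^T\Kt\one\,\one^T\Lt\one$ --- where one must avoid double-counting and correctly recognise that every one-index-overlap sum collapses to $S_3$.
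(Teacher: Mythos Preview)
The paper does not actually prove this theorem: it simply attributes the result to \citet{song} and states it without argument. Your proposal, by contrast, gives a complete direct verification, and the plan is correct in every detail---the multiplicity counts $(n-2)(n-3)$, $n-3$, $1$ for the three term types, the decompositions $\tr(\Kt\Lt)=S_2$, $\one^T\Kt\Lt\one=S_2+S_3$, $\one^T\Kt\one\,\one^T\Lt\one=S_4+4S_3+2S_2$, and the final coefficient matching (including the identity $(n-1)(n-4)+2=(n-2)(n-3)$) all check out. So your approach is not merely different from the paper's---it supplies a self-contained proof where the paper only gives a citation.
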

\subsection{Lasso Formulation of Normal Weighted HSIC-Lasso}
        We consider a normal weighted HSIC-Lasso selection with associated estimate
        \begin{equation}
                \hat{\beta} = \argmin_{\beta\in\R^p_+} -
                \beta^T H + \frac{1}{2}\,\beta^T M \beta + \lambda\, \beta^T\! w,
                \label{eq:wn_hsic-lasso}
        \end{equation}
        according to Definition 8.\newline
        Assuming that \(M\) is positive definite, we can reformulate (\ref{eq:wn_hsic-lasso}) in terms of a Lasso-problem as follows
        \begin{equation*}\label{lasso_rep}
                \hat{\beta}  = \argmin_{\beta\in\R^p_+}
                \frac{1}{2}\lVert Y-U\beta \rVert_2^2 + \lambda\, \beta^T w.
        \end{equation*}
        \(U\) is determined by the Cholesky decomposition \(M = U^T U\) and \(Y\) is the solution to \(H = U^T Y\). This formulation facilitates the computation of the estimate as there is a variety of efficient algorithms and software packages for Lasso problems available. These are tailored to optimise expressions with a regularisation term and, therefore, yield sparse solutions.

\section{Hypothesis Testing}
        We assume that the response \(Y\) follows a normal distribution \(\N(\mu, \Sigma)\), where \(\mu\) is unknown and \(\Sigma\) is given, and that the selection event can be represented as a polyhedron, i.e. \(\{\hat{S}=S\} = \{AY\leq b\}\). Furthermore, the quantity of interest can be expressed as \(\eta_S^T \mu\), but we drop the dependence on \(S\) in the following. \citet{taylor14} described how one- and two-sided hypothesis testing and confidence interval calculation can be done in this setting. Suppose we want to test
        \begin{equation*}\label{eq:hypothesis_testing}
                \text{H}_0: \eta_S^T \mu = 0
                \quad\text{against}\quad
                \text{H}_1: \eta_S^T \mu > 0.
        \end{equation*}
        Then the statistic
        \begin{equation*}
                T_1 = 1-F_{0,\eta^T\Sigma\eta}^{[\V^-(Z),\V^+(Z)]}(\eta^T Y)
        \end{equation*}
        is a valid p-value for H\textsubscript{0} conditional on \(\{AY\leq b\}\). Further, defining \(\delta_\alpha\) for \(0\leq\alpha\leq1\) such that
        \begin{equation*}
                1-F_{\delta_\alpha,\eta^T\Sigma\eta}^{[\V^-(Z),\V^+(Z)]}(\eta^T Y) = \alpha
        \end{equation*}
        yields a valid one-sided confidence interval \([\delta_\alpha,\infty)\) conditional on \(\{AY\leq b\}\).\newline
        Likewise, we consider the two-sided hypothesis testing problem
        \begin{equation*}
                \text{H}_0: \eta_S^T \mu = 0
                \quad\text{against}\quad
                \text{H}_1: \eta_S^T \mu \neq 0
        \end{equation*}
        and use the statistic
        \begin{equation*}
                T_2 = 2 \min\left\{F_{0,\eta^T\Sigma\eta}^{[\V^-(Z),\V^+(Z)]}(\eta^T Y), 1 - F_{0,\eta^T\Sigma\eta}^{[\V^-(Z),\V^+(Z)]}(\eta^T Y)\right\}.
        \end{equation*}
        Again, \(T_2\) is a valid conditional p-value and defining \(\delta_{\alpha/2}\) and \(\delta_{1-\alpha/2}\) such that
        \begin{align*}
                1-F_{\delta_{\alpha/2},\eta^T\Sigma\eta}^{[\V^-(Z),\V^+(Z)]}(\eta^T Y) &= \alpha/2, \\
                1-F_{\delta_{1-\alpha/2},\eta^T\Sigma\eta}^{[\V^-(Z),\V^+(Z)]}(\eta^T Y) &= 1-\alpha/2
        \end{align*}
        yields a valid confidence interval \([\delta_{\alpha/2}, \delta_{1-\alpha/2}]\) conditional on \(\{AY\leq b\}\).

\section{Proofs}
\subsection{Intermediary Results}
This subsection collects technical results that will be used in the proofs of the following subsections.
First, we state an auxiliary lemma that corresponds to Lemma A in Section 4.3.3 of \cite{lee_u_stats}.
        \begin{lemma}\label{lem:aux}
            Let the random variables \(Z_1,\dots,Z_N\) have a multinomial distribution \(\textnormal{Mult}(m; N^{-1},\dotsc,N^{-1})\) and let \((a_i)_{i\in\Nat}\) be a sequence having the properties \({\lim_{N\to\infty} N^{-1}\sum_{i=1}^{N}a_i =0}\) and \(\lim_{N\to\infty} N^{-1}\sum_{i=1}^{N}a_i^2~=~\sigma^2\). Then,
                \begin{equation*}
                        m^{-\frac{1}{2}}\sum_{i=1}^{N} a_i(Z_i - m/N)\toD \N(0,\sigma^2),\quad\text{as }m,N\to\infty.
                \end{equation*}
        \end{lemma}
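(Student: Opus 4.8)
The plan is to reduce the multinomial-weighted sum to a normalised sum of independent and identically distributed summands and then to invoke the Lindeberg--Feller central limit theorem for triangular arrays. The starting point is the standard construction of the multinomial vector by sampling with replacement: let $\xi_1,\dots,\xi_m$ be independent and uniformly distributed on $\{1,\dots,N\}$ and set $Z_i=\#\{j:\xi_j=i\}$, so that $(Z_1,\dots,Z_N)\sim\mathrm{Mult}(m;N^{-1},\dots,N^{-1})$. Writing $\bar a_N:=N^{-1}\sum_{i=1}^N a_i$, a direct computation gives
\[
\sum_{i=1}^N a_i\bigl(Z_i-m/N\bigr)=\sum_{j=1}^m a_{\xi_j}-m\,\bar a_N=\sum_{j=1}^m\bigl(a_{\xi_j}-\bar a_N\bigr),
\]
so the quantity of interest equals $m^{-1/2}\sum_{j=1}^m(a_{\xi_j}-\bar a_N)$, i.e.\ a normalised sum of the i.i.d.\ centred variables $W_j:=a_{\xi_j}-\bar a_N$.

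Next I would identify the variance. Since $\E{a_{\xi_1}}=\bar a_N$ and $\E{a_{\xi_1}^2}=N^{-1}\sum_{i=1}^N a_i^2$, the two hypotheses give $\bar a_N\to0$ and hence $\sigma_N^2:=\var{W_1}=N^{-1}\sum_{i=1}^N a_i^2-\bar a_N^2\to\sigma^2$. Because the common law of the summands changes with $N$, this is genuinely a triangular-array problem: the $n$-th row consists of $m=m_n$ copies of a variable whose distribution depends on $N=N_n$. The target statement is therefore exactly the conclusion of the Lindeberg--Feller theorem applied to this array, once its hypotheses are checked.

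The remaining, and main, task is to verify the Lindeberg condition: for every $\eps>0$,
\[
\frac{1}{\sigma_N^2}\,\frac1N\sum_{i=1}^N (a_i-\bar a_N)^2\,\ind\bigl\{|a_i-\bar a_N|>\eps\sqrt{m}\,\sigma_N\bigr\}\longrightarrow 0 .
\]
This is where I expect the difficulty to lie, since the truncation level $\eps\sqrt{m}\,\sigma_N$ must outgrow the large values of the sequence. When the $a_i$ are bounded --- the relevant regime in our application, where the kernel $h$ is bounded --- the condition is immediate: as $m\to\infty$ the level $\eps\sqrt{m}\,\sigma_N\to\infty$ eventually exceeds $\sup_i|a_i-\bar a_N|$, so the indicator vanishes for all $i$ and the left-hand side is $0$ for $n$ large. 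In the general case I would first observe that the two moment hypotheses already force $a_i^2=o(i)$, hence $\max_{i\le N}a_i^2=o(N)$, by taking successive differences of $\sum_{i\le N}a_i^2=\sigma^2N+o(N)$; this controls the summand-level tail once $m$ grows comparably to $N$. Having verified Lindeberg, the Lindeberg--Feller theorem yields $m^{-1/2}\sum_{j=1}^m W_j\toD\N(0,\sigma^2)$, which is the claim. The delicate point worth flagging is that the argument presupposes a regime in which the per-summand tail is negligible relative to $\sqrt m$; this holds whenever we invoke the lemma, but it is the only step that is not a routine consequence of the reduction above.
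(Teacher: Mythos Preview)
The paper does not supply its own proof of this lemma; it simply records it as Lemma~A from Section~4.3.3 of Lee's \emph{U-Statistics} monograph and uses it as a black box. Your i.i.d.\ representation of the multinomial via $Z_i=\#\{j:\xi_j=i\}$ and the reduction to a triangular-array Lindeberg--Feller argument is precisely the natural route, and it is complete in the bounded-$a_i$ regime that the paper actually needs (the kernel $h$ is bounded because $k$ and $l$ are): once $\sup_i|a_i|<\infty$ the truncation level $\eps\sqrt m\,\sigma_N\to\infty$ eventually dominates and the Lindeberg sum vanishes identically.

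Your caution about the unbounded case is well placed, and in fact the gap you flag is real. The deduction $\max_{i\le N}a_i^2=o(N)$ from the second-moment hypothesis is correct, but it only yields Lindeberg when $m$ grows at least like $N$, whereas in the paper's application $m=\OL(n)$ and $N=\binom{n}{4}$, so $m/N\to0$. The lemma as literally stated, with no relation between $m$ and $N$ and no tail control on $(a_i)$, is not true: take $a_{k^2}=c\sqrt{k}$ and $a_i=0$ otherwise, which satisfies both moment hypotheses with $\sigma^2=c^2/2$, and let $m=N^{1/4}$; then with probability at least $1-mN^{-1/2}=1-N^{-1/4}\to1$ none of the $\xi_j$ hits a perfect square, so the statistic equals $-\sqrt m\,\bar a_N\to0$ rather than converging to $\N(0,\sigma^2)$. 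So an honest statement requires either a boundedness/uniform-integrability hypothesis on $(a_i)$ or a growth link between $m$ and $N$; both hold in the intended application, and your argument is correct there.
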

\citeauthor{u_stats_rus} presented a multidimensional version of the central limit theorem for U-statistics (\citeyear{u_stats_rus}).
\begin{theorem}\label{th:clt_u_stats}
    Let \(U_n^{(1)},\ldots,U_n^{(m)}\) be U-statistics according to Definition \ref{def:u-stats} and let \(X_1^{(i)},\ldots,X_n^{(i)}, i\in\{1,\ldots,m\},\) be the corresponding i.i.d. random variables. The respective kernels, degrees and expectations are denoted by \(h^{(i)}\), \(k^{(i)}\) and \(\theta^{(i)}\). We introduce the definitions
    \begin{equation*}
        \psi^{(i)}(x) := \E{h^{(i)}(x,X_2^{(i)},\ldots,X_{k^{(i)}}^{(i)}) - \theta^{(i)}},\quad
        \sigma^{(i,j)} := \E{\psi^{(i)}(X_1^{(i)})\,\,\psi^{(j)}(X_1^{(j)})},\quad
        i,j\in\{1,\ldots,m\}.
    \end{equation*}
    If \(\sigma^{(i,i)} > 0\) and \(\E{\big(h^{(i)}(X_1^{(i)},\ldots,X_{k^{(i)}}^{(i)})\big)^2} < \infty\) hold for all \(i\in\{1,\ldots,m\}\), then
    \begin{equation*}
        \sqrt{n}
        \begin{pmatrix}
                (U_n^{(1)}-\theta^{(1)})/k^{(1)} \\
                \vdots \\
                (U_n^{(m)}-\theta^{(m)})/k^{(m)}
                \end{pmatrix}
        \toD
        \N(0,\Sigma),\quad\text{as }n\to \infty,
    \end{equation*}
    where the elements of \(\Sigma\) are given by \(\Sigma_{ij} = \sigma^{(i,j)}, i,j\in\{1,\ldots,m\}\).
\end{theorem}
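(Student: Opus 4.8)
The plan is to reduce the multivariate statement to the ordinary central limit theorem for sums of i.i.d.\ random vectors by means of the H\'ajek projection, which linearises each U-statistic. First I would recall that, for a single U-statistic \(U_n^{(i)}\) of degree \(k^{(i)}\), its projection onto sums of functions of single observations is
\[
\hat U_n^{(i)} := \theta^{(i)} + \frac{k^{(i)}}{n}\sum_{\ell=1}^n \psi^{(i)}(X_\ell^{(i)}),
\]
so that \((\hat U_n^{(i)}-\theta^{(i)})/k^{(i)} = \tfrac1n\sum_{\ell=1}^n \psi^{(i)}(X_\ell^{(i)})\) is an average of i.i.d., mean-zero summands, since \(\E{\psi^{(i)}(X_1^{(i)})}=0\) by construction.

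Next I would show that the scaled difference between each U-statistic and its projection is asymptotically negligible. From the Hoeffding decomposition and the mutual orthogonality of its components one obtains the classical bound \(\var{U_n^{(i)}-\hat U_n^{(i)}} = \OL(n^{-2})\), whence \(\E{\big(\sqrt n\,(U_n^{(i)}-\hat U_n^{(i)})/k^{(i)}\big)^2}\to 0\) and therefore \(\sqrt n\,(U_n^{(i)}-\hat U_n^{(i)})/k^{(i)} \toP 0\) for every \(i\). The assumption \(\E{\big(h^{(i)}(X_1^{(i)},\ldots,X_{k^{(i)}}^{(i)})\big)^2}<\infty\) is precisely what guarantees that all these variances are finite and that \(\psi^{(i)}(X_1^{(i)})\) is square-integrable, conditional expectation being an \(L^2\)-contraction. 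Collecting coordinates, the vector with \(i\)-th entry \(\sqrt n\,(U_n^{(i)}-\theta^{(i)})/k^{(i)}\) differs from the vector of scaled projections by a term tending to \(0\) in probability, so by the multivariate Slutsky lemma the two vectors share the same limiting law.

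It therefore suffices to identify the limit of the projection vector. Writing \(\Psi_\ell := \big(\psi^{(1)}(X_\ell^{(1)}),\ldots,\psi^{(m)}(X_\ell^{(m)})\big)^T\), the scaled projection vector equals \(n^{-1/2}\sum_{\ell=1}^n \Psi_\ell\). Because the observations are i.i.d.\ across the sample index \(\ell\), the vectors \(\Psi_1,\ldots,\Psi_n\) are i.i.d.\ and mean-zero; their common covariance matrix has \((i,j)\)-entry \(\cov{\psi^{(i)}(X_1^{(i)})}{\psi^{(j)}(X_1^{(j)})} = \E{\psi^{(i)}(X_1^{(i)})\,\psi^{(j)}(X_1^{(j)})} = \sigma^{(i,j)}\), again because each \(\psi\) has zero mean. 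A single application of the multivariate central limit theorem -- equivalently, the Cram\'er--Wold device followed by the univariate CLT for the square-integrable linear combinations \(\sum_i t_i\,\psi^{(i)}(X_1^{(i)})\) -- then yields \(n^{-1/2}\sum_{\ell=1}^n \Psi_\ell \toD \N(0,\Sigma)\) with \(\Sigma_{ij}=\sigma^{(i,j)}\), and the claim follows. The hypothesis \(\sigma^{(i,i)}>0\) only serves to place us in the non-degenerate regime in which the \(\sqrt n\)-rate is the correct one.

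I expect the one genuinely delicate step to be the \(L^2\)-negligibility of \(\sqrt n\,(U_n^{(i)}-\hat U_n^{(i)})\): once each U-statistic is replaced by its linear projection, the multidimensional aspect comes for free from the ordinary i.i.d.\ CLT, as all cross-dependence between coordinates is already encoded in the joint law of the single-observation summands \(\Psi_\ell\). In a fully rigorous account this negligibility is standard and can be imported from the Hoeffding-decomposition theory for U-statistics rather than re-derived here.
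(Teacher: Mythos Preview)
Your argument is correct and is precisely the classical route to the multivariate U-statistic CLT: H\'ajek projection onto single-observation functions, \(L^2\)-negligibility of the remainder via the Hoeffding decomposition, and then the ordinary i.i.d.\ CLT (or Cram\'er--Wold) on the projection vector. The paper, however, does not supply its own proof of this statement at all; it merely cites it as a known result from the literature on U-statistics (Korolyuk and Borovskich) and then invokes it as a tool inside the proof of Theorem~\ref{HSIC-CLT}. So there is nothing to compare against: you have written out the standard proof that the cited reference contains, whereas the paper treats the theorem as a black box.
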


\subsection{Proof of Theorem \ref{HSIC-CLT}}
        The statement for \(\HvBl\) is a direct consequence of the multidimensional central limit theorem. The expression \(\sqrt{n/B} \big(\HvBl - H_0\big)\) can be written as
        \begin{equation*}
                \sqrt{n/B} \left(\frac{1}{n/B}\sum_{b=1}^{n/B}
                \begin{pmatrix}
                \Hu(X^{b,(1)},Y^b) \\
                \vdots \\
                \Hu(X^{b,(p)},Y^b)
                \end{pmatrix}
                -
                \begin{pmatrix}
                \HSIC(X^{(1)},Y) \\
                \vdots \\
                \HSIC(X^{(p)},Y)
                \end{pmatrix}\right).
        \end{equation*}
        The \(n/B\) random variables in the sum are independent and identically distributed due to the i.i.d. assumption and data subdivision. Moreover, the involved estimators are unbiased and \(n/B \to \infty\).\newline\newline
        In order to prove the second statement of Theorem \ref{HSIC-CLT}, we use an adaptation of the one-dimensional proof of asymptotic normality for an incomplete U-statistics estimator using random subset selection, cf. Theorem 1 in Section 4.3.3 of \cite{lee_u_stats}.
        %Before commencing the proof, we state an auxiliary lemma.
        %\begin{lemma}\label{lem:aux}
        %       Let \((a_i)_{i\in\Nat}\) be a sequence having the properties \({lim_{N\to\infty} N^{-1}\sum_{i=1}^{N}a_i =0}\) and \(lim_{N\to\infty} N^{-1}\sum_{i=1}^{N}a_i^2 = \sigma^2\) and the let the random variables \(Z_1,\dots,Z_N\) have a multinomial distribution \(\textnormal{Mult}(m; N^{-1},\dotsc,N^{-1})\). Then as \(m,N\to\infty\)
        %       \begin{equation*}
        %               m^{-\frac{1}{2}}\sum_{i=1}^{N} a_i(Z_i - m/N)\toD \N(0,\sigma^2).
        %       \end{equation*}
        %\end{lemma}
        We prove multidimensional convergence with the Cram\'er-Wold device, see e.g. Theorem 11.2.3 of \cite{bible}. That is it suffices to show that \(\sqrt{m}\,\nu^T\big(\Hvinc - H\big)\) converges to a one-dimensional Gaussian distribution as \(m\to\infty\) for any \(\nu\in\R^p\).\newline
        We introduce the independent random vectors \(Z^{(j)}, j\in\{1,\dotsc,p\}\) and index their entries with \(\Sk_{n,4}\); hence, their elements are \(\{Z_S^{(j)}\colon S\in\Sk_{n,4}\}\). All of them follow a multinomial distribution \(\textnormal{Mult}(m; N^{-1},\dotsc,N^{-1})\), where \(N = \binom{n}{4}\). Hence, we can write
        \begin{equation}\label{eq:proof1}
                m^\frac{1}{2}\,\nu^T\big(\Hvinc - H\big) =
                m^{-\frac{1}{2}}\,\nu^T\sum_{S\in\Sk_{n,4}} Z_S \big(h(S) - H\big),
        \end{equation}
        where the sum as well as the product within is to be understood componentwise, and \({Z=(Z^{(1)},\dots,Z^{(p)})}\) as well as \(h\) are used in a vectorised way, slightly abusing notation. In order to derive the asymptotic distribution of (\ref{eq:proof1}), we consider its characteristic function \(\phi_n\).
        In the following manipulations we drop the indices for the summation \(\sum_{S\in\Sk_{n,4}}\), introduce the notation \(\cramped{X^{(j)} = (X_1^{(j)},\ldots,X_n^{(j)}), j\in\{1,\ldots,p\}}\), and \(Y\) accordingly, and denote the \(p\)-dimensional vector of (complete) U-statistics by \(U_n\), that is the vector of unbiased HSIC-estimators. Then:
        \begin{align*}
                \phi_n(t) &=
                \E{\exp\left(it\,m^{-\frac{1}{2}}\,\nu^T\sum Z_S \big(h(S) - H\big)\right)}\\
                &=
                \E{\Ec{\exp\left(it\,m^{-\frac{1}{2}}\,\nu^T\sum Z_S \big(h(S) - H\big)\right)}{X^{(1)},\ldots,X^{(p)}, Y}}\\
                &= \E{\exp\bigg(it\,m^\frac{1}{2}\!\sum_{j=1}^{p} \nu_j (U_n^{(j)}\!-\!H_j)\bigg) \Ec{\exp\bigg(it\,m^{-\frac{1}{2}}\!\sum_{j=1}^{p}\!\nu_j\sum \Big(Z_S^{(j)}\! - \frac{m}{N}\Big) \big(h_j(S)\! -\! H_j\big)\bigg)}{X^{(1)},\hbox to 1em{.\hss.\hss.},X^{(p)}\!, Y}}\\
                &= \E{\exp\bigg(it\,m^\frac{1}{2}\sum_{j=1}^{p} \nu_j (U_n^{(j)}\!-\!H_j)\bigg)\Ec{\exp\left(it\,m^{-\frac{1}{2}}\nu_j\sum \Big(Z_S^{(j)}\! - \frac{m}{N}\Big) \big(h_j(S)\! -\! H_j\big)\right)}{X^{(1)},\hbox to 1em{.\hss.\hss.},X^{(p)}\!, Y}}.
        \end{align*}
        In the manipulations above we used the tower law of conditional expectation and the independence of the \(Z_S^{(j)}, {j\in\{1,\ldots,p\}}\). Moreover, we inserted \(\pm \,m(U_n-H)=m/N \sum (h(S)-H)\).\newline
        Having separated the randomness coming from the data and the subset selection, we treat the second factor in the product above. Standard U-statistics theory implies that
        \begin{equation*}
                \lim_{N\to\infty} N^{-1} \sum_{S\in\Sk_{n,4}} (h_j(S)-H_j) = 0\quad\text{and}\quad
                \lim_{N\to\infty} N^{-1} \sum_{S\in\Sk_{n,4}} (h_j(S)-H_j)^2 = \sigma^2_j
        \end{equation*}
        almost surely, where \citet{song} stated a formula for \(\sigma^2_j\). Ergo, the requirements of Lemma \ref{lem:aux} are fulfilled and applying it together with  the dominated convergence theorem yields
        \begin{align*}
                \lim_{n\to\infty} \phi_n(t) &= \lim_{n\to\infty} \E{\exp\left(it\,m^\frac{1}{2}\sumP \nu_j (U_n^{(j)}\!-\!H_j)\right)} \prod_{j=1}^p \exp\big(-(\sigma_j\nu_j)^2t^2/2\big) \\
                &= \lim_{n\to\infty}\E{\exp\left(it\sqrt{m/n}\, \nu^T\sqrt{n}\,(U_n-H)\right)} \prod_{j=1}^p \exp\big(-(\sigma_j\nu_j)^2t^2/2\big).
        \end{align*}
        \citet{aistats} pointed out that \(\sigma^{(i,i)}=0\), according Theorem \ref{th:clt_u_stats}, holds for \(Y\independent X^{(i)}\), whereas \(\sigma^{(i,i)}>0\) is true if the response and the \(i\)-th covariate are dependent. Therefore, we define the index set \({I:=\{i\colon \HSIC(X^{(i)},Y) > 0\}}\) and the positive definite matrix \(\Xi\) by \(\Xi_{ij} = 16\,\sigma^{(i,j)}, i,j\in I\).        Using Theorem \ref{th:clt_u_stats} and Slutsky's theorem, we arrive at
        \begin{equation*}
                \lim_{n\to\infty} \phi_n(t) = \exp\big(-(\sqrt{l}\,\nu_I^T\,\Xi\,\nu_I)\, t^2 /2\big) \prod_{j=1}^p \exp\big(-(\sigma_j\nu_j)^2t^2/2\big).
        \end{equation*}
        The limit of \(\phi_n\) is clearly a Gaussian characteristic function which proves asymptotic normality.\qed

\subsection{Proof of Theorem \ref{th:asymp_pivot}}
%The continuous mapping theorem plays a crucial role in the proof of the asymptotic pivot. Therefore, we recall it here according to \cite{van_der_vaart}.
%\begin{theorem}[Continuous mapping theorem]
%       Suppose \(X_n \to X\) in distribution. Let \(g\colon \R^k\to \R^m\) be a continuous at every point of a set \(C\) such that \(\Prob(X\in C)=1\). Then, \(g(X_n)\to g(X)\) in distribution.
%\end{theorem}

We prove Theorem \ref{th:asymp_pivot} in two steps: First, we establish \((H_n, M_n, \Shat_n)\vert\{A_n H_n\leq b_n\} \to (H, M, \Sigma)\vert\{A H\leq b\}\) in distribution; second, we apply the continuous mapping theorem (CMT), see for example Theorem 2.3 (i) in \cite{van_der_vaart},  to the cdf of a truncated Gaussian. The ideas of this proof are heavily influenced by \cite{high_dim_gen1}.\newline\newline
Let \(S\) be a set of selected covariates and \(\tilde{S}\subseteq S\). In the following, we use the abbreviations \(\eta_n =\eta_\St(M_n)\), \(A_n = A_\St(M_n)\) and \(b_n = b_\St(M_n)\). Applying Theorem 2.7 (v) of \cite{van_der_vaart}, we get \((H_n,M_n)\to (H,M)\) in distribution, where \(H\) has the law \(\N(\mu, \Sigma)\). Furthermore, due to the independence of \(\Shat_n\) from \((H_n, M_n)\) we can easily extend the convergence to \((H_n,M_n, \Shat_n)\to (H,M, \Sigma)\). Ultimately, since \(A\) and \(b\) are almost surely continuous, the CMT yields \((H_n,M_n, \Shat_n, A_n H_n\!-b_n)\to (H,M, \Sigma, AH-b)\) in distribution and we define \(\Gamma_n :=(H_n,M_n, \Shat_n)\).\newline
We arrange the components of \(\Gamma_n\) in a vector, fix an arbitrary \(x\in\R^{p+2p^2}\) and analyse the asymptotics of the conditional distribution \((\Gamma_n\vert A_nH_n\leq b_n)\)
\begin{align*}
        \Prob\,\big(\Gamma_n \leq x\,\vert\, A_nH_n\!-b_n \leq 0\big) &=
        \frac{\Prob\,\big(\Gamma_n \leq x, A_nH_n\!-b_n \leq 0\big)}{\Prob\,\big(A_nH_n\!-b_n \leq 0\big)}\\
        &\to
        \frac{\Prob\,\big(\Gamma \leq x, AH-b\leq 0\big)}{\Prob\,\big(AH-b \leq 0\big)}=
        \Prob\,\big(\Gamma \leq x\,\vert\, AH-b \leq 0\big),\quad\text{as }n\to\infty.
\end{align*}
This is true because both the numerator and denominator converge to the respective probabilities and the denominator is bounded away from zero as the interior of \(\{AH\leq b\}\) is not empty. Thus, we have shown that
\begin{equation}\label{eq:asymp_cond}
        (H_n, M_n, \Shat_n)\vert\{A_n H_n\leq b_n\} \toD (H, M, \Sigma)\vert\{A H\leq b\},\quad\text{as }n\to\infty.
\end{equation}
In order to use this convergence result for
\begin{equation}\label{eq:asymp_F}
        F_{\eta_n^T\mu,\eta_n^T\Shat_n\eta_n}^{[\V^-(Z_n),\V^+(Z_n)]}(\eta_n^T H_n)\vert \{A_n H_n\leq b_n\},
\end{equation}
where \(Z_n\) is defined according to Lemma \ref{lem:polyhedral_lemma}, we have to verify that this expression is an a.s. continuous function of \((H_n, M_n, \Shat_n)\). The cdf of a truncated Gaussian random variable \(F_{x_1, x_2}^{[x_4, x_5]}(x_3)\) has five arguments and is continuous, if \(x_4 < x_5\) holds.\newline
In (\ref{eq:asymp_F}), we plug in \(\eta_n^T\mu\), \(\eta_n^T\Shat_n\eta_n\) and \(\eta_n^T H_n\) for \(x_1\), \(x_2\) and \(x_3\), respectively, which are, by assumption, a.s. continuous with respect to \(\Gamma_n\). The truncation points \(\V^-(Z_n)\) and \(\V^+(Z_n)\) are the maximum and minimum of finite sets of continuous functions. We denote these two sets \(G^-\) and \(G^+\). Hence, all discontinuity points of \(\V^-(Z_n)\) and \(\V^+(Z_n)\) are contained in \(E = \bigcup_{j=1}^p\{\e_j^TA_nC_n=0\}\), i.e. the set of points where the functions contained in \(G^-\) and/or \(G^+\) change.
%i.e. the boundary of the sets for where the functions which are considered for maximisation, respectively, minimisation are the same.
As a finite union of lower-dimensional subspaces, \(E\) is a null set. Therefore, \(\V^-(Z_n)\) and \(\V^+(Z_n)\) are almost surely continuous functions of \(\Gamma_n\). Moreover, we deduce from the definition of the truncation points that
\begin{equation*}
        \V^-(Z) = \V^+(Z)\quad\Leftrightarrow\quad
        \eta^TH = \frac{b_j-(AZ)_j}{(AC)_j}\quad\forall j\in J,
\end{equation*}
where \(J\) is defined as \(\{j\colon(AC)_j\neq 0\}\). Rearranging these equations, we arrive at \(\{\V^-(Z) = \V^+(Z)\} = {\{(AH)_j = b_j\,\forall j\in J\}}\). As a lower-dimensional subspace this set has measure zero and, consequently, \({\V^-(Z) < \V^+(Z)}\) holds almost surely. In summary, (\ref{eq:asymp_F}) depends on \((H_n, M_n, \Shat_n)\) in an a.s. continuous fashion. Using (\ref{eq:asymp_cond}), the CMT and Theorem \ref{theo:polyhedral_lemma}, we obtain
\begin{equation*}
        F_{\eta_n^T\mu,\eta_n^T\Shat_n\eta_n}^{[\V^-(Z_n),\V^+(Z_n)]}(\eta_n^T H_n)\vert \{A_n H_n\leq b_n\} \toD
        F_{\eta^T\mu,\eta^T\Sigma\eta}^{[\V^-(Z),\V^+(Z)]}(\eta^T H)\vert \{A H\leq b\}
        \sim
        \mathrm{Unif}\,(0,1).
\end{equation*}
\qed

%\begin{equation*}
%       F_{\eta_\St(M_n)^T\mu,\eta_\St(M_n)^T\Shat_n\eta_\St(M_n)}^{[\V^-(Z_n),\V^+(Z_n)]}(\eta_\St(M_n)^T H_n)\vert \{A_\St(M_n) H_n\leq b_\St(M_n)\}
%\end{equation*}

\subsection{Proof of Theorem \ref{th:trunc_points}}
        In order to characterise the selection event \(\{\Sh = S\}\), we assume w.l.o.g. that the first \(\lvert S\rvert\) covariates of \(\{X_1,\ldots,X_p\}\) were included into the model. We rely on the Karush-Kuhn-Tucker (KKT) conditions, cf. Section 5.5.3 of \cite{boyd}, that identify the solution of an optimisation problem by a set of equations and inequalities. Since the function to be minimised is convex due to the positive definiteness of \(M\) and Slater's condition, cf. Section 5.2.3 of \cite{boyd}, the KKT conditions provide an equivalent characterisation of the solution of the HSIC-Lasso problem.
        %The function to be minimised is convex as \(M\) is positive definite and Slater's condition is clearly fulfilled. Therefore, the KKT conditions provide an equivalent characterisation of the solution of the HSIC-Lasso problem, cf. Theorem \ref{th:slater}.
        We obtain
        \begin{equation}\label{eq:KKT}
        \begin{gathered}
                0 = - H + M\hat{\beta} + \lambda w - u, \\
                \beta_j \geq 0,\qquad u_j \geq 0,\qquad \beta_j u_j = 0,\qquad \forall j\in\{1,\ldots,p\}.
        \end{gathered}
        \end{equation}
        We partition the upper inequalities along \(S\) and \(S^c\) and get
        \begin{align}
                \hat{\beta}_S &= M_{SS}^{-1}(H_S - \lambda\, w_S),\label{eq:1} \\
                0 &\leq H_{S^c} + (M\hat{\beta})_{S^c} - \lambda\, w_{S^c}.\label{eq:2}
        \end{align}
        These results translate into two set of inequalities. First, all entries of \(\hat{\beta}\) must be non-negative which implies
        \begin{equation*}
                0 \leq M_{SS}^{-1}(H_S - \lambda\, w_S)
                \quad\Leftrightarrow\quad
                -\lambda^{-1}\left(M_{SS}^{-1} \,\vert\, 0\right) H \leq
                -M_{SS}^{-1}\,w_S.
        \end{equation*}
        Second, \(M\hat{\beta} = M_S\hat{\beta}_S\) holds by definition of \(\Sh\). Hence, we can plug (\ref{eq:1}) into (\ref{eq:2}) and obtain
        \begin{align*}
                &0 \leq H_{S^c} + M_{SS^c}\big(M_{SS}^{-1}(H_S - \lambda\, w_S)\big) - \lambda\, w_{S^c} \\
                \Leftrightarrow\quad &-\lambda^{-1}\left(M_{SS^c}M_{SS}^{-1} \,\vert\, \Id\right) H
                \leq
                w_{S^c}-M_{SS^c}M_{SS}^{-1}\,w_S.
        \end{align*}
        Both these set of inequalities describe the selection in an affine linear fashion. In this setting, we can use the polyhedral lemma to compute the truncation points \(\V^-\) and \(\V^+\).
        \newline\newline
        For the selection event \(\{j \in\Sh\}\), we again use the KKT conditions (\ref{eq:KKT}). For any \(j\in S\), \(u_j\) equals zero and we can express \(H_j\) as follows
        \begin{equation*}
                H_j = \e_j^T\left(M\bh + \lambda w\right) =
                \e_j^T\left(M\bh_{-j} + \lambda w\right) + M_{jj}\bh_j
                > \e_j^T\left(M\bh_{-j} + \lambda w\right),
        \end{equation*}
        where \(\bh_{-j}\!\) denotes \(\bh\) with the \(j\)-th entry set to zero. This estimation holds true as \(\bh_j\) is positive by definition of \(\Sh\) and \(M_{jj} = \e_j^T\! M \e_j > 0\) because \(M\) is positive definite. Rearranging the inequality, we obtain
        \begin{equation*}
                -\e_j^T H < -\,\e_j^T M \bh_{-j}\! - \lambda w_j.
        \end{equation*}
        \qed
        \newline
        \begin{remark}
                Since the selection event \(\{j \in\Sh\}\) is less complex than \(\{\Sh = S\}\), it is possible to directly derive the truncation points without the need for the polyhedral lemma.\newline
                To this end, we decompose \(H\) into a component in direction of \(\eta\) and one perpendicular to \(\eta\)
                \begin{equation*}
                        H = (\eta^T H) \cdot C + Z.
                \end{equation*}
                Again, we apply the KKT conditions (\ref{eq:KKT}) and obtain
                \begin{equation*}
                        0 = (\eta^T H) \cdot C - Z + M\hat{\beta} + \lambda w - u,
                \end{equation*}
                with \(u\in\R^p_+\). Since \(j\notin\hat{S} \Leftrightarrow\hat{\beta}_j = 0\) holds by definition of \(\hat{S}\), the inequality
                \begin{equation}\label{eq:3}
                        0 \leq \e_j^T \left[(\eta^T H) \cdot C - Z + M\hat{\beta}_{-j} + \lambda w\,\right],
                \end{equation}
                ensues for this case. Rearranging (\ref{eq:3}), we find
                \begin{equation}
                        \eta^T H \leq \frac{1}{\e_j^T C}\left[\e_j^T M\hat{\beta}_{-j} - \e_j^T Z + \lambda w_j\right]. \label{eq:V-}
                \end{equation}
                Consequently, for the event \(\{j\in\hat{S}\}\) the lower truncation point \(\V^-(Z)\) is the RHS of (\ref{eq:V-}) and \(\V^+(Z) = \infty\).
        \end{remark}

        \section{Pseudocode of the Algorithm}
        Along with the description of the algorithm in Section \ref{sec:algorithm}, we give a more detailed account on the different steps of our PSI-procedure for HSIC-Lasso in the following.
        %\floatevery{algorithmic}{\setlength\hsize{5cm}}
        \begin{algorithm}[h!]
        \caption{Post-selection inference for HSIC-Lasso selection with HSIC- or partial target}
        \label{alg:alg}
        \begin{algorithmic}
            \STATE \textbf{Input:} data $(\Xd^n, \Yd^n)$; level $\alpha$; inference target $t$; split ratio $s$; number of screened variables $P$; screen-, $M$- and $H$-estimators $\text{e}_s, \text{e}_M, \text{e}_H$
            \STATE \textbf{Output:} significant variables $I_{sig}$

            \STATE$(\Xd^{n,1}, \Yd^{n,1}), (\Xd^{n,2}, \Yd^{n,2}) \leftarrow \text{split}((\Xd^n, \Yd^n),s)$

            \hfill\COMMENT{1st fold}
            \STATE $H^{(1)} \leftarrow \text{estimate}_H(\Xd^{n,1}, \Yd^{n,1}, \text{e}_s)$
            \STATE $I_{sc} \leftarrow \text{screening}(H^{(1)}, P) $
            \STATE$M^{(1)} \leftarrow \text{estimate}_M(\Xd^{n,1}_{I_{sc}}, \text{e}_s)$
            \STATE $\tilde{M}^{(1)}\leftarrow\text{positive-definite-approximation}(M^{(1)})$
            \STATE $U_1 \leftarrow \text{cholesky}(\tilde{M}^{(1)});\,\, Y_1 \leftarrow U^{-T}_1 H^{(1)}_{I_{sc}}$
            \STATE $\lambda \leftarrow \text{cross-validation}(U_1, Y_1)$ \hfill\COMMENT{or AIC}
            \STATE $w \leftarrow \text{weights}(U_1, Y_1)$

            \hfill\COMMENT{2nd fold}
            \STATE $H^{(2)} \leftarrow \text{estimate}_H(\Xd^{n,2}_{I_{sc}}, \Yd^{n,2}_{I_{sc}}, \text{e}_H)$
            \STATE $M^{(2)} \leftarrow \text{estimate}_M(\Xd^{n,2}_{I_{sc}}, \text{e}_M)$
            \STATE $\tilde{M}^{(2)}\leftarrow\text{positive-definite-approximation}(M^{(2)})$
            \STATE $\hat{\Sigma} \leftarrow \text{estimate}_{\Sigma}(H^{(2)})$
            \STATE $U_2 \leftarrow \text{cholesky}(\tilde{M}^{(2)});\,\,Y_2 \leftarrow U^{-T}_2 H^{(2)}$
            \STATE $\hat{\beta} \leftarrow \text{lasso-opimisation}(Y_2, U_2, \lambda, w)$
            \STATE $S \leftarrow \text{non-zero-indices}(\hat{\beta})$
            \STATE $I_{sig} \leftarrow \emptyset$

            \IF{$t$ is partial target}
                \STATE $A \leftarrow -\lambda^{-1}
                \begin{pmatrix}
                        (\tilde{M}^{(2)}_{SS})^{-1} &\vert& 0\,\\
                        \tilde{M}^{(2)}_{S^cS}(\tilde{M}^{(2)}_{SS})^{-1}&\vert& \Id\,
                \end{pmatrix};\quad
                b \leftarrow
                \begin{pmatrix}
                        -(\tilde{M}^{(2)}_{SS})^{-1}\,w_S\\
                        w_{S^c}-\tilde{M}^{(2)}_{S^cS}(\tilde{M}^{(2)}_{SS})^{-1}\,w_S
                \end{pmatrix}
                $
                \ENDIF

                \FORALL{$j \in S$}
                \IF{$t$ is HSIC-target}
                        \STATE $\eta \leftarrow \e_j$
                        \STATE $C \leftarrow (\eta^T\hat{\Sigma}\eta)^{-1}\hat{\Sigma}\,\eta;\quad Z \leftarrow(\Id - C \eta^T)H^{(2)}$
                        \STATE $\V^-_j \leftarrow (\e_j^T C)^{-1}\left[\e_j^T \tilde{M}^{(2)}\hat{\beta}_{-j} - \e_j^T Z + \lambda w_j\right]; \quad\V^+_j \leftarrow \infty$
                \ELSIF{$t$ is partial target}
                        \STATE $\eta \leftarrow \e_j^T ((\tilde{M}_{SS}^{(2)})^{-1}\,\vert\,0)$
                        \STATE $\V^-, \V^+ \leftarrow \text{truncation-points}(A, b, \eta,\hat{\Sigma})$
                \ENDIF
                \STATE $p \leftarrow 1-F_{0,\eta^T\hat{\Sigma}\eta}^{[\V^-,\V^+]}(\eta^T H)$
                \IF{$p \leq \alpha$}
                \STATE $I_{sig} \leftarrow I_{sig} \cup \{j\}$
                \ENDIF
                \ENDFOR
            \STATE \textbf{return} $I_{sig}$
        \end{algorithmic}
        \end{algorithm}

        \section{Additional Experiment}
        We analyse the \textit{Communities and Crime} datset from the UCI Repository, cf. \cite{uci}, which was created by \cite{redmond} and combines socio-economic (1990), law enforcement (1990) and crime data (1995) from 1\,994 different US communities.\footnote{The sources of the dataset are \cite{data1}, \cite{data2}, \cite{data3} and \cite{data4}.} The authors provide 122 (numerical) features to predict the total number of violent crimes per 100\,000 inhabitants. Since the law enforcement data is complete for only 319 communities, we carry out two different analyses: First, we examine the subset of 319 datapoints with complete features; then, we consider the whole dataset but only use covariates without missing data. (Moreover, we delete one data point with a missing value in a feature that is not part of the law enforcement data.)\newline
        We highlight that our approach does not target causal relationships; instead, it merely concerns the associations between a feature and the absence or presence of violent crimes. Moreover, as we rely on the Hilbert-Schmidt independence criterion, the existence of a relationship only but not its strength or direction is captured. Lastly, we emphasise that our analysis of the dataset focuses on the characteristics of the proposed methods, rather than on drawing sociological conclusions. To the latter end, a more in-depth analysis is necessary.
        \newline
        Since the number of covariates is manageable, we do not screen features but use 25 \% and 20 \% of the data, respectively, to determine the regularisation parameter of the non-adaptive HSIC-Lasso via 10-fold cross-validation. We employ the Gaussian kernel throughout as the data is continuous and use the unbiased HSIC-estimator for the matrix $M$. Both the block and the incomplete estimators are examined, each with different sizes, the number of selected variables for Multi is set to $k=10$ and we use the confidence level $\alpha = 0.05$.\newline
        We depict the findings of the two different analyses in Table \ref{tab:all-features} and \ref{tab:restricted-features}, respectively. First, we notice that selection via HSIC-Lasso effectively reduces the number of highly correlated features among the selected covariates compared to HSIC-ordering. For instance, the latter chooses the features \textit{racePctBlack} and \textit{racePctWhite}, which describe the percentage of the population which is African American and Caucasian, respectively, and are presumably highly correlated, for every applied HSIC-estimator in both analyses. On the contrary, HSIC-Lasso almost always only selects \textit{racePctWhite}. This behaviour becomes even more evident when we consider the features \textit{pctFam2Par}, \textit{pctKids2Par}, \textit{pctYoungKids2Par} and \textit{pctTeen2Par} which denote the percentage of two parent families with children and children, young children and teenagers in family housing with two parents, respectively. Due to the strong association of these covariates, HSIC-ordering regularly selects all four of them whereas HSIC-Lasso mostly chooses only one of them as they are highly dependent.\newline
        In both analyses, we notice that there is only moderate disagreement among the different HSIC-estimators in terms of the selected features; though, it is more pronounced in the first analysis as the sample size is lower and the number of covariates higher.
        %This indicates that the achieved HSIC-estimates seem reasonable.
        Moreover, we observe that the incomplete HSIC-estimator, in particular with larger sizes, yields a higher number of accepted covariates than the block estimator. We attribute this behaviour to the fact that the accuracy of the estimates grows with the sizes $l$ and $B$, respectively; however, we cannot choose the block size too large in order to preserve the normality assumption whereas there exists no such restriction for the incomplete HSCI-estimator. For this reason, we argue to particularly focus on the results in the I20 columns.\newline
        Furthermore, the \textit{Communities and Crime} dataset highlights the utility of the partial target and thus the benefits of using post-selection inference with HSIC-Lasso instead of HSIC-ordering. Contrary to the HSIC-target which only considers the association between a feature and the outcome, the partial target adjusts for the dependence structure among all selected covariates. In both analyses, this leads to a small set of accepted features that presumably are not dependent on each other to a large degree. Hence, researchers obtain a clearer and more interpretable view on the data compared to using the HSIC-target.\newline
        Lastly, we compare the findings of the two different analyses mostly focusing on the results of the partial target for the incomplete HSIC-estimator with size $l=20$. Some features like \textit{racePctWhite}, \textit{pctKids2Par} and  \textit{pctIlleg}, the percent of children born to never married, are agreed upon by both studies. On the other hand, \textit{numStreet}, the number of homeless people counted in the street, \textit{PctVacantBoarded}, the percentage of vacant housing that is boarded up, and \textit{PolicCars}, the number of police cars, are only found significant in the first analysis whereas \textit{pctWInvInc}, the percentage of households with investment/rent income in 1989, \textit{FemalePctDiv}, the percentage of divorced females, \textit{PctHousLess3BR}, the percentage of housing units with less than 3 bedrooms, and \textit{LemasPctOfficDrugUn}, the percentage of officers assigned to drug units, are only signficant in the second analysis. This hints that there might be correlation between features and the presence/absence of missing data stemming from the data collection and/or assimilation giving rise to the discrepancies between the two analyses. Yet, a more in-depth analysis is necessary to dissolve this uncertainty.

        \setlength{\arrayrulewidth}{0.2mm}
    \setlength{\tabcolsep}{3pt}
    \renewcommand{\arraystretch}{1.3}
        \begin{table}[t]
        \caption{Acceptance of the HSIC- and partial target for the subset of data points without missing values ($n=319, p=122$). We denote the feature names according to their abbreviation in the UCI Repository and the column names describe the different HSIC estimators and their sizes, e.g. I10 is the shorthand notation for the incomplete HSIC-estimator of size 10. Grey rectangles denote that a feature was selected but rejected to be significant at level $\alpha=0.05$, whereas a black rectangle means acceptance and significance.}
        \label{tab:all-features}
        \vskip 0.1in
        \begin{center}
    \begin{small}
    \begin{sc}
        \begin{tabular}{|c|*{5}{>{\centering\arraybackslash}p{0.5cm}}|*{5}{>{\centering\arraybackslash}p{0.5cm}}|*{5}{>{\centering\arraybackslash}p{0.5cm}}|}
                \hline
                \multicolumn{1}{|c|}{Feature} & \multicolumn{5}{c|}{Proposal - Partial}
                & \multicolumn{5}{c|}{Proposal - HSIC} & \multicolumn{5}{c|}{Multi - HSIC} \\ \cline{2-16}
                & B5 & B10 & I5 & I10 & I20 & B5 & B10 & I5 & I10 & I20 & B5 & B10 & I5 & I10 & I20 \\\hline
                racePctBlack & \ns & \ns & \sel & \sel & \ns & \ns & \ns & \ac & \ac & \ns & \ac & \ac & \ac & \ac & \ac \\
                racePctWhite & \ns & \sel & \sel & \ac & \ac & \ns & \ac & \ac & \ac & \ac & \ac & \ac & \ac & \ac & \ac \\
                medIncome & \ns & \sel & \ns & \ns & \ns & \ns & \ac & \ns & \ns & \ns & \ns & \ns & \ns & \ns & \ns \\
                pctWWage & \sel & \ns & \ns & \ns & \ns & \sel & \ns & \ns & \ns & \ns & \ns & \ns & \ns & \ns & \ns \\
                pctWFarmSelf & \sel & \ns & \ns & \ns & \ns & \sel & \ns & \ns & \ns & \ns & \ns & \ns & \ns & \ns & \ns \\
                pctWInvInc & \ns & \ns & \sel & \sel & \sel & \ns & \ns & \ac & \ac & \ac & \ns & \ns & \ns & \ns & \ns \\
                pctWPubAsst & \ns & \ns & \sel & \sel & \ns & \ns & \ns & \ac & \ac & \ns & \sel & \ac & \ac & \ac & \ns \\
                perCapInc & \ns & \sel & \ns & \ns & \ns & \ns & \ac & \ns & \ns & \ns & \ns & \ns & \ns & \ns & \ns \\
                blackPerCap & \ns & \sel & \ns & \ns & \ns & \ns & \sel & \ns & \ns & \ns & \ns & \ns & \ns & \ns & \ns \\
                indianPerCap & \sel & \ns & \ns & \ns & \ns & \sel & \ns & \ns & \ns & \ns & \ns & \ns & \ns & \ns & \ns \\
                NumUnderPov & \ns & \ns & \ns & \ns & \sel & \ns & \ns & \ns & \ns & \ac & \sel & \ns & \ns & \ns & \ns \\
                PctPopUnderPov & \ns & \ns & \ns & \ns & \sel & \ns & \ns & \ns & \ns & \ac & \ns & \ac & \ac & \ac & \sel \\
                MalePctDivorce & \ns & \ns & \ns & \ns & \sel & \ns & \ns & \ns & \ns & \ac & \ns & \ns & \ns & \ns & \ns \\
                FemalePctDiv & \ns & \ns & \ns & \sel & \ns & \ns & \ns & \ns & \ac & \ns & \ns & \ns & \ns & \ns & \ns \\
                TotalPctDiv & \ns & \ns & \sel & \ns & \sel & \ns & \ns & \ac & \ns & \ac & \ns & \ns & \ns & \ns & \ns \\
                PctFam2Par & \ns & \ns & \sel & \ns & \ns & \ns & \ns & \ac & \ns & \ns & \sel & \ac & \ac & \ac & \ac \\
                PctKids2Par & \sel & \sel & \sel & \sel & \ac & \ac & \ac & \ac & \ac & \ac & \ac & \ac & \ac & \ac & \ac \\
                PctYoungKids2Par & \ns & \ns & \ns & \sel & \ns & \ns & \ns & \ns & \ac & \ns & \sel & \ac & \ac & \ac & \ac \\
                PctTeen2Par & \ns & \ns & \ns & \ns & \ns & \ns & \ns & \ns & \ns & \ns & \ns & \ac & \ac & \ac & \sel \\
                PctWorkMom & \ns & \sel & \ns & \ns & \ns & \ns & \sel & \ns & \ns & \ns & \ns & \ns & \ns & \ns & \ns \\
                NumIlleg & \ns & \ns & \ns & \ns & \ns & \ns & \ns & \ns & \ns & \ns & \sel & \ns & \ns & \ns & \ns \\
                PctIlleg & \sel & \sel & \sel & \sel & \ac & \ac & \ac & \ac & \ac & \ac & \ac & \ac & \ac & \ac & \ac \\
                MedNumBR & \sel & \ns & \sel & \sel & \ns & \sel & \ns & \sel & \sel & \ns & \ns & \ns & \ns & \ns & \ns \\
                HousVacant & \ns & \ns & \ns & \ns & \ac & \ns & \ns & \ns & \ns & \ac & \ns & \ns & \ns & \ns & \ns \\
                PctVacantBoarded & \sel & \sel & \ac & \ac & \ac & \ac & \ac & \ac & \ac & \ac & \ns & \ac & \ac & \ac & \ac \\
                MedYrHousbuilt & \sel & \ns & \ns & \ns & \ns & \sel & \ns & \ns & \ns & \ns & \ns & \ns & \ns & \ns & \ns \\
                RentHighQ & \ns & \ns & \sel & \sel & \ns & \ns & \ns & \ac & \ac & \ns & \ns & \ns & \ns & \ns & \ns \\
                NumInShelters & \ns & \ns & \ns & \ns & \sel & \ns & \ns & \ns & \ns & \ac & \ns & \ns & \ns & \ns & \ns \\
                NumStreet & \ac & \sel & \ac & \ac & \ac & \ac & \ac & \ac & \ac & \ac & \ns & \ns & \ns & \ns & \ns \\
                PctBornSameState & \ns & \ns & \sel & \ns & \ns & \ns & \ns & \ac & \ns & \ns & \ns & \ns & \ns & \ns & \ns \\
                LemasTotalReq & \sel & \sel & \ns & \ns & \ns & \sel & \sel & \ns & \ns & \ns & \ns & \ns & \ns & \ns & \ns \\
                LemasTotReqPerPop & \sel & \sel & \sel & \sel & \ns & \sel & \sel & \ac & \ac & \ns & \ns & \ns & \ns & \ns & \ns \\
                PolicReqPerOffic & \sel & \sel & \ns & \ns & \sel & \sel & \sel & \ns & \ns & \ac & \ns & \ns & \ns & \ns & \ns \\
                RacialMatchCommPol & \ns & \ns & \sel & \sel & \ns & \ns & \ns & \ac & \ac & \ns & \ns & \ns & \ns & \ns & \ns \\
                PctPolicWhite & \ns & \sel & \ns & \ns & \sel & \ns & \ac & \ns & \ns & \ac & \ns & \ns & \ns & \ns & \ns \\
                PctPolicBlack & \sel & \ns & \ns & \ns & \ns & \ac & \ns & \ns & \ns & \ns & \ns & \ns & \ns & \ns & \ac \\
                PctPolicHisp & \sel & \ns & \ns & \ns & \ns & \sel & \ns & \ns & \ns & \ns & \ns & \ns & \ns & \ns & \ns \\
                PctPolicMinor & \sel & \sel & \ns & \ns & \ns & \ac & \ac & \ns & \ns & \ns & \ns & \ns & \ns & \ns & \ns \\
                NumKindsDrugsSeiz & \sel & \sel & \ns & \ns & \ns & \sel & \sel & \ns & \ns & \ns & \ns & \ns & \ns & \ns & \ns \\
                PolicAveOTWorked & \ns & \sel & \ns & \ns & \ns & \ns & \sel & \ns & \ns & \ns & \ns & \ns & \ns & \ns & \ns \\
                PctUsePubTrans & \sel & \ns & \ns & \ns & \ns & \ac & \ns & \ns & \ns & \ns & \sel & \ns & \ns & \ns & \ns \\
                PolicCars & \ns & \ns & \sel & \ac & \ac & \ns & \ns & \ac & \ac & \ac & \ns & \ns & \ns & \ns & \ns \\
                LemasPctPolicOnPatr & \ns & \ns & \ns & \ns & \sel & \ns & \ns & \ns & \ns & \sel & \ns & \ns & \ns & \ns & \ns \\
                LemasGangUnitDeploy & \sel & \sel & \ns & \ns & \ns & \sel & \sel & \ns & \ns & \ns & \ns & \ns & \ns & \ns & \ns \\
                LemasPctOfficDrugUn & \sel & \sel & \sel & \sel & \ns & \sel & \sel & \sel & \sel & \ns & \ns & \ns & \ns & \ns & \ns \\\hline
        \end{tabular}
    \end{sc}
    \end{small}
        \end{center}
        \vskip -0.1in
    \end{table}

    \begin{table}[t]
        \caption{Acceptance of the HSIC- and partial target for the subset of data that only contains features without missing data ($n=1993, p=100$). We denote the feature names according to their abbreviation in the UCI Repository and the column names describe the different HSIC estimators and their sizes, e.g. I10 is the shorthand notation for the incomplete HSIC-estimator of size 10. Grey rectangles denote that a feature was selected but rejected to be significant at level $\alpha=0.05$, whereas a black rectangle means acceptance and significance.}
        \label{tab:restricted-features}
        \vskip 0.1in
        \begin{center}
    \begin{small}
    \begin{sc}
        \begin{tabular}{|c|*{6}{>{\centering\arraybackslash}p{0.5cm}}|*{6}{>{\centering\arraybackslash}p{0.5cm}}|*{6}{>{\centering\arraybackslash}p{0.5cm}}|}
                \hline
                \multicolumn{1}{|c|}{Feature} & \multicolumn{6}{c|}{Proposal - Partial}
                & \multicolumn{6}{c|}{Proposal - HSIC} & \multicolumn{6}{c|}{Multi - HSIC} \\ \cline{2-19}
                & B5 & B10 & B20 & I5 & I10 & I20 & B5 & B10 & B20 & I5 & I10 & I20 & B5 & B10 & B20 & I5 & I10 & I20 \\\hline
                racePctBlack & \ns & \ns & \ns & \ns & \ns & \ns & \ns & \ns & \ns & \ns & \ns & \ns & \ac & \ac & \ac & \ac & \ac & \ac\\
                racePctWhite & \sel & \sel & \sel & \sel & \ac & \ac & \ac & \ac & \ac & \ac & \ac & \ac & \ac & \ac & \ac & \ac & \ac & \ac\\
                racePctHisp & \sel & \sel & \sel & \sel & \sel & \ac & \ac & \ac & \ac & \ac & \ac & \ac & \ns & \ns & \ns & \ns & \ns & \ns\\
                pctWFarmSelf & \sel & \sel & \sel & \sel & \sel & \sel & \sel & \sel & \sel & \ac & \ac & \ac & \ns & \ns & \ns & \ns & \ns & \ns\\
                pctWInvInc & \sel & \sel & \sel & \ac & \ac & \ac & \ac & \ac & \ac & \ac & \ac & \ac & \ns & \ns & \ns & \ns & \ns & \ns\\
                pctWPubAsst & \sel & \ns & \ns & \ns & \ns & \ns & \ac & \ns & \ns & \ns & \ns & \ns & \ac & \ac & \ac & \ac & \ac & \ac\\
                blackPerCap & \sel & \ns & \ns & \ns & \ns & \ns & \ac & \ns & \ns & \ns & \ns & \ns & \ns & \ns & \ns & \ns & \ns & \ns\\
                indianPerCap & \ns & \sel & \sel & \sel & \ns & \sel & \ns & \sel & \sel & \sel & \ns & \ac & \ns & \ns & \ns & \ns & \ns & \ns\\
                HispPerCap & \sel & \sel & \ns & \ns & \ns & \ns & \ac & \ac & \ns & \ns & \ns & \ns & \ns & \ns & \ns & \ns & \ns & \ns\\
                PctPopUnderPov & \ns & \sel & \ns & \ns & \ns & \ns & \ns & \ac & \ns & \ns & \ns & \ns & \ac & \ac & \ac & \ac & \ac & \ac\\
                PctEmploy & \sel & \ns & \ns & \ns & \ns & \ns & \ac & \ns & \ns & \ns & \ns & \ns & \ns & \ns & \ns & \ns & \ns & \ns\\
                PctEmplProfServ & \ns & \ns & \ns & \sel & \sel & \sel & \ns & \ns & \ns & \ac & \ac & \ac & \ns & \ns & \ns & \ns & \ns & \ns\\
                FemalePctDiv & \sel & \ac & \sel & \ac & \ac & \ac & \ac & \ac & \ac & \ac & \ac & \ac & \ns & \ns & \ns & \ns & \ns & \ns\\
                PctFam2Par & \ns & \ns & \ns & \ns & \ns & \ns & \ns & \ns & \ns & \ns & \ns & \ns & \ac & \ac & \ac & \ac & \ac & \ac\\
                PctKids2Par & \sel & \sel & \sel & \ac & \ac & \ac & \ac & \ac & \ac & \ac & \ac & \ac & \ac & \ac & \ac & \ac & \ac & \ac\\
                PctYoung2Par & \ns & \ns & \ns & \sel & \ac & \sel & \ns & \ns & \ns & \ac & \ac & \ac & \ac & \ac & \ac & \ac & \ac & \ac\\
                PctTeen2Par & \ns & \ns & \ns & \ns & \ns & \ns & \ns & \ns & \ns & \ns & \ns & \ns & \ac & \ac & \ac & \ac & \ac & \ac\\
                PctWorkMom & \sel & \sel & \ac & \ns & \ns & \sel & \ac & \ac & \ac & \ns & \ns & \ac & \ns & \ns & \ns & \ns & \ns & \ns\\
                PctIlleg & \sel & \sel & \sel & \ac & \ac & \ac & \ac & \ac & \ac & \ac & \ac & \ac & \ac & \ac & \ac & \ac & \ac & \ac\\
                PctImmigRec5 & \ns & \sel & \ns & \ns & \ns & \ns & \ns & \ac & \ns & \ns & \ns & \ns & \ns & \ns & \ns & \ns & \ns & \ns\\
                PctImmigRec10 & \sel & \sel & \sel & \ns & \ns & \ns & \ac & \ac & \ac & \ns & \ns & \ns & \ns & \ns & \ns & \ns & \ns & \ns\\
                PersPerRentOccHous & \sel & \ns & \ns & \ns & \ns & \ns & \ac & \ns & \ns & \ns & \ns & \ns & \ns & \ns & \ns & \ns & \ns & \ns\\
                PctPersOwnOccup & \sel & \sel & \sel & \ns & \ns & \ns & \ac & \ac & \ac & \ns & \ns & \ns & \ns & \ns & \ns & \ns & \ns & \ns\\
                PctPersDenseHous & \sel & \sel & \sel & \ac & \ac & \sel & \ac & \ac & \ac & \ac & \ac & \ac & \ns & \ns & \ns & \ns & \ns & \ns\\
                PctHousLess3BR & \sel & \sel & \sel & \sel & \ac & \ac & \ac & \ac & \ac & \ac & \ac & \ac & \ns & \ns & \ns & \ns & \ns & \ns\\
                MedNumBR & \sel & \sel & \ac & \ac & \ac & \sel & \ac & \ac & \ac & \ac & \ac & \ac & \ns & \ns & \ns & \ns & \ns & \ns\\
                HousVacant & \ns & \ns & \sel & \ac & \ac & \ac & \ns & \ns & \ac & \ac & \ac & \ac & \ns & \ns & \ns & \ns & \ns & \ns\\
                PctHousOccup & \ns & \sel & \sel & \sel & \sel & \sel & \ns & \ac & \ac & \ac & \ac & \ac & \ns & \ns & \ns & \ns & \ns & \ns\\
                PctVacantBoarded & \ns & \sel & \sel & \ns & \ns & \ns & \ns & \ac & \ac & \ns & \ns & \ns & \ns & \ns & \ns & \ns & \ns & \ns\\
                PctVacMore6Mos & \sel & \sel & \ns & \ns & \ns & \ns & \sel & \sel & \ns & \ns & \ns & \ns & \ns & \ns & \ns & \ns & \ns & \ns\\
                PctHousNoPhone & \ns & \ns & \ns & \ns & \ns & \ns & \ns & \ns & \ns & \ns & \ns & \ns & \ac & \ac & \ac & \ac & \ac & \ac\\
                MedRentPctHousInc & \ns & \sel & \sel & \sel & \sel & \ac & \ns & \ac & \ac & \ac & \ac & \ac & \ns & \ns & \ns & \ns & \ns & \ns\\
                MedOwnCostPctIncNoMtg & \sel & \sel & \ns & \ns & \ns & \ns & \sel & \sel & \ns & \ns & \ns & \ns & \ns & \ns & \ns & \ns & \ns & \ns\\
                NumStreet & \sel & \ns & \ns & \ns & \ns & \ns & \sel & \ns & \ns & \ns & \ns & \ns & \ns & \ns & \ns & \ns & \ns & \ns\\
                PctBornSameState & \sel & \ns & \ns & \ns & \ns & \ns & \sel & \ns & \ns & \ns & \ns & \ns & \ns & \ns & \ns & \ns & \ns & \ns\\
                PctSameHouse85 & \sel & \sel & \ac & \ns & \sel & \sel & \sel & \ac & \ac & \ns & \ac & \ac & \ns & \ns & \ns & \ns & \ns & \ns\\
                PctSameCity85 & \ns & \ns & \ns & \sel & \ns & \ns & \ns & \ns & \ns & \sel & \ns & \ns & \ns & \ns & \ns & \ns & \ns & \ns\\
                PopDens & \ns & \ns & \ns & \ns & \ns & \sel & \ns & \ns & \ns & \ns & \ns & \ac & \ns & \ns & \ns & \ns & \ns & \ns\\
                LemasPctOfficDrugUn & \ns & \sel & \sel & \ac & \ac & \ac & \ns & \ac & \ac & \ac & \ac & \ac & \ns & \ns & \ns & \ns & \ns & \ns\\

                \hline
        \end{tabular}
    \end{sc}
    \end{small}
        \end{center}
        \vskip -0.1in
    \end{table}
\end{document}